\numberwithin{equation}{section}
\newcommand\reallywidehat[1]{%
\savestack{\tmpbox}{\stretchto{%
  \scaleto{%
    \scalerel*[\widthof{\ensuremath{#1}}]{\kern-.6pt\bigwedge\kern-.6pt}%
    {\rule[-\textheight/2]{1ex}{\textheight}}
  }{\textheight}%
}{0.5ex}}%
\stackon[1pt]{#1}{\tmpbox}%
}
\newtheorem{theorem}{Theorem}[section]
\newtheorem{proposition}[theorem]{Proposition}
\newtheorem*{lemma*}{Lemma}
\theoremstyle{definition}
\newtheorem*{definition*}{Definition}
\newtheorem*{proposition*}{Proposition}
\newtheorem{example}{Example}
\newtheorem{remark}[theorem]{Remark}
\newcommand{\R}{\mathbb{R}}
\newcommand{\Z}{\mathbb{Z}}
\renewcommand{\epsilon}{\varepsilon}
\newcommand{\tr}{\mathrm{tr}}
\newcommand{\im}{\mathrm{im \;}}
\newcommand{\Ac}{\mathcal{A}}
\newcommand{\Ic}{\mathcal{I}}
\newcommand{\Uc}{\mathcal{U}}
\newcommand{\Sc}{\mathcal{S}}
\newcommand{\Hc}{\mathcal{H}}
\newcommand{\dom}{\mathrm{dom}}
\newsavebox{\boxedtikzcdbox}
\title[Approximation tool for mixed- and equidimensional modeling]{An abstract approximation tool for mixed-dimensional and equidimensional modeling}
\author{Daniel F. Holmen\textsuperscript{1,*}}
\author{Jan M. Nordbotten\textsuperscript{1}}
\address{*Corresponding author: daniel.holmen@uib.no}
\address{\textsuperscript{1} Center for Modeling of Coupled Subsurface Dynamics, Department of Mathematics, University of Bergen, Allégaten 41, Bergen, Norway
}
\author{Jon E. Vatne\textsuperscript{2}}
\address{\textsuperscript{2} Department of Economics, BI Norwegian Business School, Kong Christian Frederiks plass 5, Bergen, Norway}
\date{}
\begin{document}
\begin{abstract}
Many coupled problems in engineering and science can be described by elliptic partial differential equations on adjacent domains, where the coupling can be considered either as a thin equidimensional overlap between the model domains, or as a lower-dimensional interface. Thereby we distinguish equidimensional and mixed-dimensional models of the same system, and the relationship between these modeling approaches is of natural interest.

In this paper, we construct an overlapping open cover for a class of simplicial geometries and construct a bounded cochain map from the simplicial de Rham complex to the \v{C}ech-de Rham complex associated with the overlapping cover. Thus, we establish an isomorphism between simplicial de Rham complexes (i.e. functions and forms on mixed-dimensional partitions and their differentials) and subcomplexes of \v{C}ech-de Rham complexes (i.e. functions and forms on equidimensional partitions and their differentials), which serves as an abstract approximation tool for comparing mixed-dimensional problems to the equidimensional version of the same problem.
\end{abstract}
\maketitle


\section{Introduction}\label{section: 1}
Field models of continuum mechanics are almost invariably realized as partial differential equations \cite{truesdell2004non, temam2005mathematical}. When different physical objects are coupled, the resulting models are then naturally realized as a system of coupled partial differential equations, wherein the coupling may either be across an interface (such as for fluid-structure-interaction \cite{richter2017fluid} and contact mechanics \cite{kikuchi1988contact}), or wherein the domains fully or partially overlap (e.g. Biot’s theory of poroelasticity \cite{coussy2004poromechanics} and peridynamics \cite{oterkus2021peridynamic}).

Many of these field models have a structure that can be described in terms of differential complexes. The most famous of these is the de Rham complex, which unifies the treatment of the classical differential operators in vector calculus: the gradient, curl and divergence.  When equipped with inner-product spaces, it gives rise to the scalar and vector Laplace equations, thus linking strongly the partial differential equations modeling processes as disparate as diffusion, heat transfer, electromagnetism and so forth \cite{arnoldFEEC2,arnoldFEEC3, deschamps1981electromagnetics}. However, it is known that also other field theories in continuum mechanics have an associated differential complex, such as Stokes equations for fluids \cite{falkstokes} and the equations of linear elasticity \cite{AFWelasticity, pauly2023elasticity}. 

We herein refer to coupled problems where the coupling is realized as interactions across interfaces separating non-overlapping domains as mixed-dimensional. In contrast, we formalize coupled problems where the coupling is realized as a point interaction between overlapping domains as equidimensional. If an equidimensional coupled problem has a “thin” overlap, we characterize the overlap in terms of a length scale $\epsilon$.  It is of both mathematical and practical interest to answer the question of whether the limit of $\epsilon \to 0$ will indeed be a mixed-dimensional model, and if yes, what the approximation properties of the mixed-dimensional ($\epsilon=0$) model will be relative to the equidimensional ($\epsilon>0$) model. Indeed, such questions have been raised and answered for many particular instances of coupled problems on thin domains. Notable examples include fluid flow in thin porous layers \cite{yortsos1995theoretical, armiti2019existence, list2020rigorous}, mechanics of brittle fracture \cite{chambolle2004approximation, linse2017convergence}, flow networks embedded in a porous material \cite{d2008coupling, koppl2018mathematical, zuninolaurino2019derivation}.

Recent work has established that various spatially coupled problems can also be cast in the context of differential complexes, but now with the more general notion of double complexes, wherein two anti-commuting differential operators are considered. For equidimensional coupled problems we consider the \v{C}ech-de Rham complex, where the first differential operator is a difference operator acting on an overlapping open cover, while the second differential operator is the standard exterior derivative of the de Rham complex. When considering the \v{C}ech-de Rham complex with square-integrable coefficients and equipped with an inner product, we get a well-posed Hodge-Laplace problem which governs certain equidimensional coupled problems \cite{cdrhodge}. 

For coupled problems of the mixed-dimensional variety, a different cochain complex than the \v{C}ech-de Rham complex is needed. Various approaches have recently been developed to address the development of mixed-dimensional physical models within porous media \cite{mdG, boon2023mixed} and electromagnetism \cite{brauer1993mixed, buffa2003electric}, as well as for the construction and analysis of numerical methods \cite{licht2017complexes, falk2015double}. Common to these constructions is that they essentially consider (extensions of) a double complex that can be identified as the simplicial de Rham complex, wherein the first differential operator is the difference between higher-dimensional traces, while the second differential operator is again the standard differential operator of the de Rham complex, restricted to the corresponding manifolds. Another interpretation of the simplicial de Rham complex is that it represents the limit as the width of the overlaps in the \v{C}ech-de Rham complex goes towards zero.

Due to the increasing interest in the application of double complexes in the analysis and design of methods for addressing coupled problems in applied sciences, it is timely to ask whether there can be established an explicit connection between the simplicial de Rham complex and the \v{C}ech-de Rham complex. Such a connection, realized in the form of a bounded cochain map, is established herein. 

Our main result is therefore to realize a bounded cochain map from the Hilbert simplicial de Rham complex into the Hilbert \v{C}ech-de Rham complex. More precisely, we state a locally (i.e. component-wise for direct summands) defined linear mapping, and establish two key properties: commutativity with the respective differential operators, and boundedness in the graph norm. The cochain map we describe defines an isomorphism between the simplicial de Rham complex and a subcomplex of the \v{C}ech-de Rham complex, which serves as a tool for comparing mixed-dimensional models with their equidimensional counterparts.

The next section is dedicated to introducing the reader to cochain complexes and the aforementioned relevant examples of double complexes. In \Cref{section: cochain map}, we describe the construction of a cochain map between the two complexes after a preliminary example. \Cref{section: properties of the cochain map} shows that the cochain map is bounded from above and below, and therefore defines an isomorphism between the domain complex of the simplicial de Rham complex and its image (realized as a subcomplex of the \v{C}ech-de Rham complex). Moreover, we introduce weights in the inner products of the simplicial de Rham complex. The weighted inner products establish invariant boundedness with respect to the thickness of the overlaps in the open cover. 

\section{Mathematical background}
In this section we briefly review the fundamental properties of cochain complexes and their weak generalization, which is referred to as Hilbert complexes.  Moreover, we explicitly recall the de Rham complex, and the two double complexes needed in this work, the \v{C}ech-de Rham complex and the simplicial de Rham complex. 

\subsection{Cochain complexes}\label{sub: cochain complexes}
A \emph{cochain complex} is a sequence of objects, for example modules, abelian groups or vector spaces, with homomorphisms, e.g. linear maps,
\begin{equation} \label{eq. cochain complex}
... \xrightarrow[]{d^{k-1}} C^k \xrightarrow[]{d^k} C^{k+1} \xrightarrow[]{} ... 
\end{equation}
with the property $d^{k} d^{k-1} = 0$ for all $k$, or equivalently that $\im d^{k-1} \subset \ker d^k$. We call the homomorphisms $d^k$ the \emph{differentials} of the cochain complex, since the differentials in the de Rham complex correspond to the differential operators gradient, curl and divergence (see \Cref{sub: de rham}). A cochain complex is exact at $C^k$ if $\im d^{k-1} = \ker d^k$, and we say that a cochain complex is exact if it is exact everywhere. The extent to which the cochain complex fails to be exact at $C^k$ is measured by the $k$-th cohomology space, which is the quotient space $\mathcal{H}^k(C) = \ker d^k / \im d^{k-1}$. Indices of differential operators will frequently be omitted whenever convenient, and we write $(C^\bullet, d)$ as shorthand for \cref{eq. cochain complex}.

A \emph{double (cochain) complex} is an array of objects $C^{p,q}$ with $p,q \in \Z$, endowed with two differentials: a horizontal differential $d_h: C^{p,q} \to C^{p+1, q}$ and a vertical differential $d_v: C^{p,q} \to C^{p, q+1}$, such that
\begin{subequations}
\begin{align}    
d_h d_h &= 0, \label{eq. horizontal diff} \\ 
d_v d_v &= 0, \label{eq. vertical diff} \\ 
d_v d_h + d_h d_v &= 0. \label{eq. anti-commute}
\end{align}  
\end{subequations}
That is, each row $C^{\bullet, q}$ and each column $C^{p, \bullet}$ is a cochain complex, and each rectangle in the double complex anticommutes:
\begin{equation}
\begin{tikzcd}
              & ...                                         & ...                                    &     \\
... \arrow[r] & {C^{p,q+1}} \arrow[u] \arrow[r, "d_h"]      & {C^{p+1,q+1}} \arrow[r] \arrow[u]      & ... \\
... \arrow[r] & {C^{p,q}} \arrow[u, "d_v"] \arrow[r, "d_h"] & {C^{p+1,q}} \arrow[r] \arrow[u, "d_v"] & ... \\
              & ... \arrow[u]                               & ... \arrow[u]                          &    
\end{tikzcd}
\end{equation}
The \emph{total complex} of a double complex $C^{\bullet, \bullet}$ is a cochain complex defined by taking the direct sum of the anti-diagonals, $T(C)^k = \bigoplus_{p+q=k} C^{p,q}$. We define the differential of the total complex by adding the vertical and horizontal differentials: $D = d_v + d_h$. Since the two differential operators $d_v$ and $d_h$ anti-commute\footnote{Anti-commuting differential operators is one of the two conventions, used in e.g. \cite{aluffialgebra}. A different convention (e.g. in \cite{bott-tu}) is to require the two differential operators $d_v$ and $d_h$ to commute, and define the total differential as $D = d_v + (-1)^k d_h$.}, the total differential $D$ satisfies $D^{k+1} D^{k} = 0$. As a convention, we will write $C^k$ and $C^{p,q}$ for the total cochain complex and the double cochain complex, respectively. 

Given two cochain complexes $(A^\bullet, d_A)$ and $(B^\bullet, d_B)$, a \emph{cochain map} is a collection of homomorphisms $f^k: A^k \to B^k$ such that for each $k$, the following diagram commutes:
\begin{equation}
\begin{tikzcd}
A^k \arrow[r, "d_A"] \arrow[d, "f^k"] & A^{k+1} \arrow[d, "f^{k+1}"] \\
B^k \arrow[r, "d_B"]                  & B^{k+1}                 
\end{tikzcd}
\end{equation}
A cochain map induces a linear map in cohomology: $f_*^\bullet: \Hc^\bullet(A) \to \Hc^\bullet(B)$.

We are ultimately interested in cochain maps between double complexes.
Such maps are given by bigraded components for each bidegree $(p,q)$ that commute simultaneously with both the horizontal differential $\delta$ and the vertical differential $d$. Such a bigraded cochain map means that each of the sides in the following diagram commutes (whereas the top and bottom squares anti-commute):
\begin{equation}    
\begin{tikzcd} [sep = .5 cm]
& & & A^{p,q+1} \arrow [rr, "\delta_A" {description, left}] \arrow [dd] & & A^{p+1,q+1} \arrow [dd]  \\
& & A^{p,q} \arrow [ru, "d_A"] \arrow [rr, "\delta_A" {description, right}, crossing over]  & & A^{p+1,q} \arrow [ru, "d_A"] \\
& & & B^{p,q+1} \arrow [rr, "\delta_B" {description, left}] & & B^{p+1,q+1} \\
 & & B^{p,q} \arrow [ru, "d_B"] \arrow [rr, "\delta_B" {description, right}] \arrow [from = uu, crossing over] & & B^{p+1,q} \arrow [ru, "d_B"] \arrow [from = uu, crossing over]\\
\end{tikzcd}
\end{equation}

By forming direct summands of bigraded components, a cochain map between double complexes induces a \emph{total cochain map} $f^k: T(A)^k \to T(B)^k$ satisfying $f^{k+1} D_A = D_B f^k$.
All maps between total complexes considered here will appear in this way, though there are also other cochain maps between total complexes, where only the differential of the total complex (but not the individual vertical and horizontal of the double complex) commute.
\subsection{Hilbert complexes} \label{sub: hilbert complexes}
We continue by briefly summarizing some key definitions of Hilbert complexes, as this generalization of the construction is needed for analysis. For a more detailed exposition to Hilbert complexes, we refer to \cite{arnoldFEEC2}.

A \emph{Hilbert complex} is a cochain complex where each $C^k$ is a Hilbert space and the differential operators $d^k$ are closed densely defined unbounded linear operators. Since the differential operators are densely defined but not necessarily defined on the entire $C^k$, we refer to both the full Hilbert complex and the subcomplex defined by the subspaces $\dom \; d^k \subset C^k$, known as the \emph{domain complex}. Moreover, we will be working with \emph{closed Hilbert complexes}, meaning that each differential operator has closed range. 

Each of the spaces in the cochain complex are equipped with inner products, which defines the adjoint of the differential operator. For $a \in C^k$, $b \in C^{k+1}$, the adjoint is given by the following equation:
\begin{equation}
\langle d^* b, a \rangle_{C^k} = \langle b, da \rangle_{C^{k+1}}. 
\end{equation}
The adjoint of the differential is a degree $-1$ operator and it is called the \emph{codifferential}. Since it also satisfies $d^*_k d^*_{k+1} =0$, the codifferential defines a cochain complex called the \emph{adjoint complex}, which goes in the opposite direction to the original cochain complex. Also associated to a Hilbert complex is the \emph{Hodge-Laplacian}, defined as $\Delta_d^k = d^{k-1} d_k^* + d^*_{k+1} d^k$. Hilbert complexes admit an orthogonal decomposition called a \emph{Hodge decomposition}:
\begin{equation}
C^k = \im d^{k-1} \oplus \ker \Delta_d^k \oplus \im d_{k+1}^*.
\end{equation}

Given a bigraded Hilbert complex $(C^{\bullet, \bullet}, d_v, d_h)$, we can define the inner product on the total complex by taking the sum of the inner products along the anti-diagonal:
\begin{equation}
\langle a, b \rangle_{C^k} = \sum_{p+q=k}  \langle a_{p,q}, b_{p,q} \rangle_{C^{p,q}}. \label{eq: double inner product}
\end{equation}
If $\dom(d_v) \cap \dom(d_h)$ is dense and $D = d_v + d_h$ is a closed operator, then the total complex $(C^\bullet, D)$ of the double Hilbert complex is therefore also a closed Hilbert complex with the inner product given by \cref{eq: double inner product}. We can therefore define the Hodge-Laplacian on the total Hilbert complex, and the total complex admits a Hodge decomposition.
\subsection{The de Rham complex}\label{sub: de rham}
The classical ellpitic partial differential equations based on operators such as gradient, divergence and curl are understood to be expressions of the fundamental complex named after de Rham. We briefly review its construction below. 

Given a smooth $n$-dimensional manifold $\Omega$, a differential form of degree one is a smooth section of the cotangent bundle, i.e. an element of $T^*\Omega \otimes C^\infty(\Omega)$. More generally, a differential form of degree $k$ is an element of $\mathrm{Alt}^k(T^*\Omega) \otimes C^\infty(\Omega)$, where $\mathrm{Alt}^k(T^*\Omega)$ denotes the $k$-th exterior power of $T^*\Omega$. We denote the spaces of differential $k$-forms by $C^\infty \Lambda^k(\Omega)$. If $\{dx_j\}_{j=1}^n$ is basis for the cotangent bundle $T^* \Omega$, then the basis for $C^\infty \Lambda^k(\Omega)$ is given by $dx_{i_1} \wedge ... \wedge dx_{i_k}$, and for a $k$-form $\alpha$ we write 
\begin{equation}
\alpha = \sum_{i_1 < ... < i_k} a_i \; dx_{i_1} \wedge ... \wedge dx_{i_k}  =  \sum_{i \in \mathcal{N}^k} a_i \; dx_{i}, \qquad a_i \in C^\infty(\Omega). \label{eq: 2.11}
\end{equation}
Here $\mathcal{N}^k$ is the set of k-tuples with strictly increasing entries, as implied by \cref{eq: 2.11}. The \emph{exterior derivative} of a $k$-form $\alpha$ is the anti-symmetric part of the directional derivative, defined as follows:
\begin{equation}
 d\alpha = \sum_{j=1}^n \sum_{i \in \mathcal{N}^k} \frac{\partial a_i}{\partial x_j} dx_j \wedge dx_i.    
\end{equation}
The exterior derivative is a linear map $d^k: C^\infty \Lambda^k(\Omega) \to C^\infty \Lambda^{k+1}(\Omega)$ and satisfies $d^{k+1} d^k = 0$. The graded space of differential forms together with the exterior derivative defines the \emph{de Rham complex} $(C^\infty\Lambda^\bullet, d)$. A differential form of degree $n$ is called a \emph{volume form}. A non-vanishing volume form $\omega \in C^\infty \Lambda^n(\Omega)$ determines a unique orientation of $\Omega$ for which $\omega$ is positively oriented at each $x \in \Omega$. All manifolds we work with are orientable, and we denote a choice of volume form on $\Omega$ by $\mathrm{vol}_\Omega$.

By considering differential forms with $L^2$-coefficients, we can define a Hilbert complex with an inner product: 
\begin{align} \label{eq: inner product}
\langle \alpha, \beta \rangle_{\Lambda^k(\Omega)} &= \int_{\Omega} \sum_{j \in \mathcal{N}^k} \alpha_j \beta_j \; \mathrm{vol}_\Omega, \qquad \alpha_j, \beta_j \in L^2(\Omega).
\end{align}
We refer to the Hilbert complex $(L^2\Lambda^\bullet(\Omega), d)$ as the $L^2$ de Rham complex. The exterior derivative is densely defined but not defined everywhere on $L^2\Lambda^k$, so we can instead consider the domain complex: 
\begin{equation}
H\Lambda^k(\Omega) := \{\alpha \in L^2\Lambda^k(\Omega) : d\alpha \in L^2\Lambda^{k+1}(\Omega)\}.
\end{equation}
Equivalently, we can define the domain complex as the closure of $C^\infty \Lambda^\bullet$ with respect to the induced norm. 

In summary, we have three different de Rham complexes: the de Rham complex with smooth coefficients $(C^\infty \Lambda^\bullet(\Omega), d)$, the de Rham complex with square-integrable coefficients $(L^2\Lambda^\bullet(\Omega), d)$ and its associated domain complex $(H\Lambda^\bullet(\Omega), d)$.

\subsection{The \v{C}ech-de Rham complex}
The structure of open covers and their overlap is given by the complex named after \v{C}ech. Combining the \v{C}ech complex with the de Rham complex gives a framework for studying multi-physics problems on (partially) overlapping domains \cite{cdrhodge}, which motivates identifying the \v{C}ech-de Rham complex as the underlying structure for coupled problems.

We continue by describing the double complex known as the \v{C}ech-de Rham complex, which is obtained by consider the \v{C}ech complex with values in $C^\infty\Lambda^\bullet$. For more details including results regarding the cohomology of the complex, we refer to \cite{bott-tu}. The Hilbert complex version of the \v{C}ech-de Rham complex is defined and related to modeling of coupled problems in \cite{cdrhodge}.

The \v{C}ech-de Rham complex is a double complex where one of the differential operators is the exterior derivative, the other is an operator taking differences, or more generally, alternating sums restricted to overlaps of open sets. 

Let $\Uc = \{U_i\}_{i \in \mathcal{I}}$ be an open cover of a smooth manifold $\Omega \subset \R^n$. We will throughout this text assume to be working with a (locally) finite open cover, although the definitions below hold for a countably infinite cover. 

We define the $p$-cochains of the \v{C}ech complex with values in the space of differential $q$-forms $C^\infty\Lambda^q$ as follows:
\begin{equation}
 C^\infty \Ac^{p,q} = \prod_{i_0 < ... < i_p}  C^\infty\Lambda^q(U_{i_0, ..., i_p}).
\end{equation}
Here, $U_{i_0, ..., i_p}$ is short-hand notation for the intersection $U_{i_0} \cap ... \cap U_{i_p}$. Moreover, we will routinely write $i$ for a multi-index $(i_0, ..., i_p)$, and denote the set of strictly increasing multi-indices of length $p+1$ by $\Ic^p$. Note that in \cref{eq: 2.11}, a multi-index $i \in \mathcal{N}^k$ is of length $k$, whereas a multi-index $i \in \Ic^p$ has length $p+1$. This discrepancy can be explained by the fact that $\mathcal{N}^k$ corresponds to differential forms of degree $k$ and $\Ic^p$ corresponds to cochains of degree $p$ in $C^\infty \Ac^{p,q}$. Moreover, given a multi-index $i \in \Ic^p$, we introduce the subset $\Ic_i$ which are all multi-indices containing $i$. We may also specify with a superscript: $\Ic_i^{p+r} \subset \Ic^{p+r}$ are all multi-indices of length $p+r+1$ which contains $i$.

We define a difference operator by taking alternating sums on the degree $p+1$ overlaps:
\begin{align}
(\delta \alpha)_i &=  \sum_{l=0}^{p+1} (-1)^{k+l}  \; \alpha_{i_0, ..., \hat{i}_l, ..., i_{p+1}}|_{U_i}, &
\forall i \in \mathcal{I}^{p+1}.
\end{align}
Here, $k=p+q$ denotes the total degree and the hat in $\hat{i}_l$ denotes that this index is omitted. When accounting for each $i \in \Ic^{p+1}$, we get a difference operator $\delta^p: C^\infty \Ac^{p,q} \to  C^\infty \Ac^{p+1,q}$.

The difference operator $\delta$ satisfies the relation $\delta^{p+1} \delta^p = 0$, and we also have the exterior derivative $d$ acting on the degree of differential forms. By letting the difference operator $\delta$ alternate with the degree $k$, we have two differential operators that anti-commute. We therefore have a double cochain complex known as the \emph{double-graded \v{C}ech-de Rham complex}.

The \v{C}ech-de Rham complex admits all the properties of a double complex described in \cref{sub: cochain complexes}, and we can therefore construct a total \v{C}ech-de Rham complex. Moreover, following the theory from \cref{sub: hilbert complexes}, we consider the \v{C}ech-de Rham complex with coefficients in $H\Lambda^\bullet$. That is, 
\begin{equation} \label{eq: domain complex cdr}
H\Ac^{p,q} = \prod_{i \in \Ic^p} H\Lambda^q(U_i), \qquad H\Ac^k = \bigoplus_{p+q=k} H\Ac^{p,q}.
\end{equation}
In future sections we will omit the $H$ as a prefix and simply write $\Ac^{p,q}$ instead of $H\Ac^{p,q}$.

\subsection{The simplicial de Rham complex}\label{sub: simplicial de rham}
The subdivision of a domain into non-overlapping open sets and their boundaries gives rise to the simplicial complex. As in the case of the \v{C}ech-de Rham complex, the combination of the simplicial complex with the de Rham complex forms the foundation for considering interface-coupled problems, and more generally also mixed-dimensional problems. For a more general construction and more details, we refer to \cite{mdG}.

A simplicial complex is called \emph{pure} if all its facets are of the same dimension $n$, meaning that all lower-dimensional $(n-p)$-simplices are part of the boundary of a simplex of dimension $(n-p+1)$. We consider a pure embedded simplicial complex where the $n$-simplices $\{\Omega^n_i\}_{i \in \Ic}$ are indexed by the same index set $\Ic$ as the open cover for the \v{C}ech-de Rham complex, hence the index set is assumed to be finite. Moreover, we assume a bijection between the $(n-p)$-simplices $\{\Omega^{n-p}_j\}_{j \in \Ic^p}$ and the $p$-overlaps from the open cover $\{U_j\}_{j \in \Ic^p}$. To account for this 1-1 correspondence, we ignore the lower-dimensional outer simplices, as they don’t correspond to an overlap of domains.

For each $\Omega_{j} \subset \partial \Omega_i$, we consider the inclusion map of a $p$-simplex $\Omega_{j}$ which is the boundary of a $p+1$-simplex $\Omega_i$. The pullback of the inclusion map $\Omega_{j} \to \Omega_i$ is a restriction of differential forms $C^\infty \Lambda^\bullet(\Omega_i) \to C^\infty \Lambda^\bullet(\Omega_{j})$. In the case where we consider distributional differential forms in $H\Lambda^\bullet$, the induced pullback is the trace operator.

We define a boundary operator called the \emph{jump operator}, acting on the simplicial de Rham complex:
\begin{align}\label{eq: jump operator}
(\delta_S \alpha)_i &=  \sum_{l=0}^{p+1} (-1)^{k+l} \tr_i \; \alpha_{i_0, ..., \hat{i}_l, ..., i_{p+1}}, &
\forall i \in \mathcal{I}^{p+1},
\end{align}
where the $\hat{i}_l$ is again understood to be index omitted, and the trace is understood to be the trace from $\Omega_{i_0, ..., \hat{i}_l, ..., i_{p+1}}$ to its boundary $\Omega_i$. Similarly to the \v{C}ech-de Rham complex, the jump operator and the exterior derivative form a double complex when acting on differential forms on the simplicial complex. Once again, we are ultimately interested in the Hilbert complex version of the double complex described above.

We introduce subspaces such that the image of the trace operator $\tr_j$ lies in $H\Lambda^q(\Omega_j)$ rather than just in $L^2\Lambda^q(\Omega_j)$, which leads to the recursive definition of the following subcomplex:
\begin{equation}
H\Lambda^k(\Omega_i, \tr) = \{\alpha \in H\Lambda^k(\Omega_i) : \tr_{j} \; \alpha \in H\Lambda^k(\Omega_{j}, \tr), \forall j \in \Ic_i \}. \label{eq: regular trace}
\end{equation}

This subcomplex allows us to take iterative traces of differential forms without losing regularity. We simply write $\tr^p$ for $p$-times iterated trace, whenever the domain and codomain are understood from the context.

We define the \emph{double-graded simplicial de Rham complex} to be the product of the subspaces from \cref{eq: regular trace}:
\begin{equation}\label{eq: domain complex Simplicial de Rham}
H\Sc^{p,q} = \prod_{i \in \Ic^p} H\Lambda^q(\Omega_{i}, \tr).
\end{equation}

The exterior derivative is acting on the degree of the differential forms:
\begin{equation}
d_\Sc^q: H\Sc^{p,q} \to H\Sc^{p, q+1}.
\end{equation}

When we consider the operator in \cref{eq: jump operator} acting on each domain $\Omega_i$ for each multi-index $i \in \Ic^{p+1}$, we get the \emph{jump operator}:
\begin{equation}
\delta_\Sc^p: H\Sc^{p,q} \to H\Sc^{p+1, q}.
\end{equation}

We define the \emph{total simplicial de Rham complex} to be the total complex of the aforementioned double complex:
\begin{equation}
H\Sc^k = \bigoplus_{p+q=k} H\Sc^{p,q}.
\end{equation}
Again, we will omit the prefix $H$ in future sections.


There are a few limitations to the embedding we describe in \cref{section: cochain map} which are worth highlighting. The most significant limitation is that we do not account for mixed-dimensional geometries where lower-dimensional features do not extend towards the outer boundary. A second limitation is that we do not consider geometries where the index sets are not matching, e.g. higher degree of intersections. In \cref{fig: permissable and nonpermissable}, we present two geometries which falls under these two limitations, and two geometries which are allowed, despite being similar to their non-permissible counterpart. We do not believe either of the restrictions mentioned here are indispensable but imposing them simplifies the exposition considerably.

\begin{figure}[htb]
    \centering
   \includegraphics[scale=0.65]{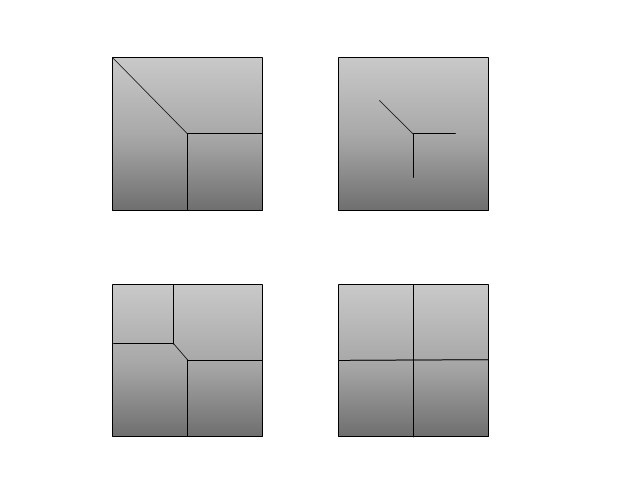}
    \caption{Top left: permissible.
    Top right: not permissible, since the lower-dimensional faces don't extend to the boundary.
    Bottom left: permissible. Bottom Right: not permissible, since the mixed-dimensional and equidimensional indexing don't match.
    }
    \label{fig: permissable and nonpermissable}
\end{figure}

\subsection{Differences between the \v{C}ech- and simplicial de Rham complexes}
The two cochain complexes we consider are similar in several ways. They both consist of differential forms in a product space with a hierarchy of codimension/degree of overlap. Under our stated assumptions, we can index the hierarchies by the same index set $\Ic$. The 1-1 correspondence also is assumed to hold for the corresponding multi-indices. Both cochain complexes have the exterior derivative as one of the differential operators, and the jump/difference operator increases the degree $p$ of codimension/degree of overlap.  There is also a clear conceptual similarity between the two complexes which motivates the comparison; they are both related to spatially coupled problems.

One of the differences between the cochain complexes is their lengths. If the maximal degree of a differential form on a submanifold exceeds the dimension of the submanifold, the differential form is evaluated to zero. Therefore, non-trivial differential forms on the simplicial de Rham complex has maximal degree $n-p$, where $p$ is the codimension of the simplex it is defined on. On the other hand, the \v{C}ech-de Rham complex is equidimensional, and we can therefore have non-trivial differential forms of degree $n$ on intersections of degree $p$.

As a consequence, the non-zero entries of the double complexes are not matching. The \v{C}ech-de Rham complex is defined for each $\Ac^{p,q}$ with $p \in \{0, ..., p_{max}\}$ and $q \in \{0, ..., n\}$, where $p_{max}$ denotes the maximal number of overlaps which satisfies $p_{max} \leq n$ by previous assumptions on the index set $\Ic$. 

On the other hand, the simplicial de Rham complex is only defined for $\Sc^{p,q}$ satisfying $0 \leq p+q \leq n$. Thus, the non-zero entries in the bigraded simplicial de Rham complex form a triangular shape, while the non-zero entries of the bigraded \v{C}ech-de Rham form a rectangular in shape. The total complex of the \v{C}ech-de Rham complex is of length $n + p_{max}$, but the the total simplicial de Rham complex has always length $n$:
\begin{equation}
\begin{tikzcd}
	0 & {\mathcal{S}^0} & {...} & {\mathcal{S}^n} & {0,} \\
	0 & {\mathcal{A}^0} & {...} & {\mathcal{A}^n} & {...} & {\mathcal{A}^{n+p_{max}}} & 0
	\arrow[from=1-1, to=1-2]
	\arrow[from=1-2, to=1-3]
	\arrow[from=1-3, to=1-4]
	\arrow[from=1-4, to=1-5]
	\arrow[from=2-1, to=2-2]
	\arrow[from=2-2, to=2-3]
	\arrow[from=2-3, to=2-4]
	\arrow[from=2-4, to=2-5]
	\arrow[from=2-5, to=2-6]
	\arrow[from=2-6, to=2-7]
\end{tikzcd}
\end{equation}
If we naively construct a map $\Xi^\bullet$ from the $\Sc^\bullet$ to $\Ac^\bullet$, we see that the differential $D^n$ maps everything in $\Sc^n$ to zero. Thus the image of $\Xi^n$ needs to lie in the kernel $\ker(D^n) \subset \Ac^n$ in order for us to have a cochain map. Equivalently, we can say that we are constructing a cochain map to the \emph{truncated} \v{C}ech-de Rham complex, which is the subcomplex 
\begin{equation}
0 \to \Ac^0 \to ... \to \Ac^{n-1} \to \ker(D^n) \to 0.
\end{equation}
Since we will in this work exclusively consider cochain maps mapping $\Sc^n$ to $\ker(D^n) \subset \Ac^n$, the distinction between the truncated and full \v{C}ech-de Rham complex will not be of great importance. Consequently, we will abuse the notation slightly, and write $\Ac^n$ for the last part of the truncated complex and simply refer to the result as the \v{C}ech-de Rham complex.
\section{Constructing cochain maps} \label{section: cochain map}
We now address the main goal of this paper, which is to construct an injective bigraded cochain map from the simplicial de Rham complex to the \v{C}ech-de Rham complex. More specifically, we are considering the Hilbert complex version of the respective complexes, as defined in \cref{eq: domain complex Simplicial de Rham} and \cref{eq: domain complex cdr}. Note that since we are constructing a cochain map between the domain complexes, the differential operators (in particular the exterior derivative) is now globally defined. 

Although we are considering the simplicial de Rham complex, a larger class of mixed-dimensional geometries can be represented by bijectively mapping the more general mixed-dimensional geometry to the simplicial geometry.

\subsection{Notation}
We let $\{\Omega_i\}_{i \in \Ic}$ denote a given simplicial complex and let $\Uc = \{U_i\}_{i \in \Ic}$ be an associated open cover, indexed by the same set $\Ic$. The construction of such open covers is described later in \cref{subsection: constructing open cover}. The simplicial de Rham complex is written as $(\Sc^{\bullet, \bullet}, d_\Sc, \delta_\Sc)$, and the \v{C}ech-de Rham complex is written as $(\Ac^{\bullet, \bullet}, d_\Ac, \delta_\Ac)$.

We write $\Xi$ for the cochain map between the double complexes, with components $\Xi^{p,q}: \Sc^{p,q} \to \Ac^{p,q}$ for ${p,q \geq 0}$.

\begin{equation}    
\begin{tikzcd} [sep = .5 cm]
& & & \Sc^{p,q+1} \arrow [rr, "\delta_\Sc" {description, left}] \arrow [dd] & & \Sc^{p+1,q+1} \arrow [dd]  \\
& & \Sc^{p,q} \arrow [ru, "d_\Sc"] \arrow [rr, "\delta_\Sc" {description, right}, crossing over]  & & \Sc^{p+1,q} \arrow [ru, "d_\Sc"] \\
& & & \Ac^{p,q+1} \arrow [rr, "\delta_\Ac" {description, left}] & & \Ac^{p+1,q+1} \\
 & & \Ac^{p,q} \arrow [ru, "d_\Ac"] \arrow [rr, "\delta_\Ac" {description, right}] \arrow [from = uu, crossing over] & & \Ac^{p+1,q} \arrow [ru, "d_\Ac"] \arrow [from = uu, crossing over]\\
\end{tikzcd}
\end{equation}

By assumption, for each $\Omega_i$ with $i \in \Ic$, there is a corresponding open set $U_i$. The intersection $U_i \cap U_j$ corresponds to $\Omega_{i,j}$, which is the common boundary of $\Omega_i$ and $\Omega_j$. We define $\Tilde{U}_i \subset U_i$ to be the part of $U_i$ which does not intersect with any of the other open sets $U_j$, $j \in \Ic$. More precisely, for $i \in \Ic^p$, we define $\Tilde{U}_i$ as follows:
\begin{equation}\label{eq: u tilde}
\Tilde{U}_i = U_i \setminus \{ \bigcup_{j \in \Ic^p, j \neq i} \overline{U_j} \}, \qquad i \in \Ic^p.
\end{equation}

For each $p \in \{0, ..., p_{max}\}$ and $i \in \Ic^p$, we consider a transformation $\phi_i: \Tilde{U}_i \to \Omega_i$. Each of the maps $\phi_i$ induces a pullback on differential forms: $\phi_i^*: H\Lambda^\bullet(\Omega_i, \tr) \to H\Lambda^\bullet(\Tilde{U}_i)$. Our goal now is to extend the map $\phi_i^*: H\Lambda^\bullet(\Omega_i, \tr) \to H\Lambda^\bullet(\Tilde{U}_i)$ to a map which has codomain $H\Lambda^\bullet({U}_i)$.

\subsection{Example in \texorpdfstring{$2D$}{2D}} \label{section: example}
We consider the 2-dimensional mixed-dimensional geometry consisting of three domains $\Omega_{0}, \Omega_1, \Omega_2$, three interfaces $\Omega_{0,1}, \Omega_{0,2}, \Omega_{1,2}$ and an intersecting point $\Omega_{0,1,2}$, as illustrated in \cref{fig: simplicial cech figure}.
\begin{figure}[htb]
    \centering
   \includegraphics[scale=0.5]{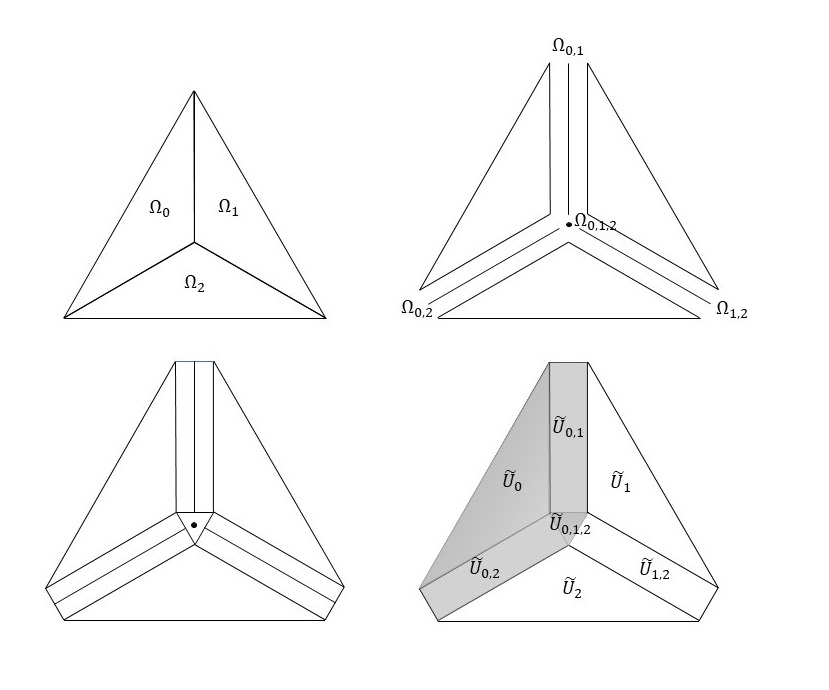}
    \caption{Top left: a simplicial geometry with three adjacent triangles. 
    Top right: the mixed-dimensional decomposition, consisting of the three triangles, the three pairwise shared edges and a vertex in the center.
    Bottom left: the edges are stretched out in their normal direction.
    Bottom right: the open set $U_0$ is highlighted in gray.
    }
    \label{fig: simplicial cech figure}
\end{figure}
The figure also depicts a possible way to construct an open cover for the given simplicial geometry.

For the example in question, we have the following diagram of cochain maps between the two complexes:
\begin{equation}
\begin{tikzcd}
0 \arrow[r] & \Sc^0 \arrow[r, "D_\Sc"] \arrow[d, "\Xi^0"] & \Sc^{1} \arrow[d, "\Xi^{1}"] \arrow[r, "D_{\Sc}"] & \Sc^{2} \arrow[d, "\Xi^2"] \arrow[r] & 0 \\
0 \arrow[r] & \Ac^0 \arrow[r, "D_\Ac"]                    & \Ac^{1} \arrow[r, "D_{\Ac}"]                      & \Ac^{2} \arrow[r]                    & 0
\end{tikzcd}
\end{equation}
That is, we need to construct cochain maps $\Xi^0$, $\Xi^1 = \Xi^{1,0} \oplus \Xi^{0,1}$ and $\Xi^2 = \Xi^{2,0} \oplus \Xi^{1,1} \oplus \Xi^{0,2}$. For $p=0$, we have 3 domains, for $p=1$ we also have 3 domains, and for $p=2$ we have 1 domain in both the mixed-dimensional and equidimensional case.
We write $f = (f_0, f_1, f_2)$ for functions $f_i$ with domains $\Omega_i$, $g = (g_{0,1}, g_{0,2}, g_{1,2})$ for functions $g_{j}$ with domains $\Omega_{j}$ (with double-indices $j = (j_0, j_1) \in \Ic^1$) and $c \in \R$ for a constant on $\Omega_{0,1,2}$. For differential forms with degree $q>0$, we write $\alpha = (\alpha_0, \alpha_1, \alpha_2)$ for differential 1-forms $\alpha_i$ with domains $\Omega_i$, $\beta = (\beta_{0,1}, \beta_{0,2}, \beta_{1,2})$ for differential 1-forms $\beta_{j}$ with domains $\Omega_{j}$ and $\omega = (\omega_0, \omega_1, \omega_2)$ for differential 2-forms $\omega_i$ with domains $\Omega_{i}$. 
We make use of the pullbacks $\phi^*_i: H\Lambda^\bullet(\Omega_i, \tr) \to H\Lambda^\bullet(\Tilde{U}_i)$, for each $i \in \Ic^p$, the trace operator $\tr_{j}: H\Lambda^\bullet(\Omega_i, \tr) \to H\Lambda^\bullet(\Omega_{j}, \tr)$,  as well as the iterated trace $\tr^2_{0,1,2}: H\Lambda^\bullet(\Omega_i, \tr) \to H\Lambda^\bullet(\Omega_{0,1,2}, \tr)$ to construct the cochain maps. We will omit the subscript of the trace operators indicating the codomain of the operator.

Consider the following cochain map for functions on the top-level domains:
\begin{align}    
\Xi^{0,0}(f)_i = 
\begin{cases}
\phi^*_i (f_i) \qquad &\text{on} \; \Tilde U_i, \\
\phi_{j}^* (\tr \; f_i) \qquad  &\text{on} \; \Tilde U_{j}, \; j \in \Ic_i, \\
\phi_{0,1,2}^* (\tr^2 \; f_i) \qquad &\text{on} \; \Tilde U_{0,1,2}.
\end{cases}
\end{align}
The function $\Xi^{0,0}(f)_i \in H\Lambda^0(U_i)$, and $\Tilde{U}_i$, $\Tilde U_{j}$ and $\Tilde U_{0,1,2}$ are understood as subsets of $U_i$ and note that $\Tilde{U}_{0,1,2} = U_{0,1,2}$. The map $\phi_{j}^* (\tr \; f_i)$ takes the boundary values of the function $f_i \in H\Lambda^0(\Omega_i, \tr)$ and maps it to the domain $\Omega_{j}$. The boundary data is then mapped to $U_{j}$, extended constantly in the direction which is normal to $\Omega_{j}$. 

The next cochain map is $\Xi^1 = \Xi^{1,0} \oplus \Xi^{0,1}$. For functions on the interfaces $\Omega_{j}$, we have the following cochain map:
\begin{align}    
\Xi^{1,0}(g)_{j} = 
\begin{cases}
\phi_{j}^* (g_{j}) \qquad  &\text{on} \; \Tilde U_{j}, \\
\phi_{0,1,2}^* (\tr \; g_{j}) \qquad &\text{on} \; \Tilde U_{0,1,2}.
\end{cases}
\end{align}

The cochain map $\Xi^{0,1}$ is similar to $\Xi^{0,0}$, except we are considering the pullback of differential forms of degree $1$:
\begin{align}    
\Xi^{0,1}(\alpha)_{i} = 
\begin{cases}
\phi^*_i (\alpha_i) \qquad &\text{on} \; \Tilde U_i, \\
\phi_{j}^* (\tr \; \alpha_i) \qquad  &\text{on} \; \Tilde U_{j}, \; j \in \Ic_i, \\
0 \qquad &\text{on} \; \Tilde U_{0,1,2}.
\end{cases}
\end{align}
Notice that we have $\phi_{0,1,2}^* (\tr^2 \; \alpha_i) = 0$ because we are restricting a 1-form to a rank $0$ tangent bundle. This is a recurring theme, that some terms with iterative traces will vanish for higher degrees of the cochain map.

For the final cochain map we have the following decomposition: $\Xi^2 = \Xi^{2,0} \oplus \Xi^{1,1} \oplus \Xi^{0,2}$.
Since $\Omega_{0,1,2}$ is a single point, all functions are simply constant values. The map $\Xi^{2,0}$ is just a constant map onto $\Tilde U_{0,1,2}$:
\begin{equation}
\Xi^{2,0}(c) = \phi_{0,1,2}^* c = c|_{\Tilde U_{0,1,2}}.
\end{equation}    

We have a cochain map for 1-forms on the interfaces $\Omega_{j}$:
\begin{align}    
\Xi^{1,1}(\beta)_{j} = 
\begin{cases}
\phi_{j}^* (\beta_{j}) \qquad  &\text{on} \; \Tilde U_{j}, \\
0 \qquad &\text{on} \; \Tilde U_{0,1,2}.
\end{cases}
\end{align}

Following the same procedure, the 2-forms are evaluated to zero outside of $\Tilde{U}_i$, since we are restricting a 2-form to a rank 0 and rank 1 tangent bundle, respectively:
\begin{align}    
\Xi^{0,2}(\omega)_i = 
\begin{cases}
\phi^*_i \omega_i \qquad &\text{on} \; \Tilde U_i, \\
0 \qquad  &\text{on} \; \Tilde U_{j}, \; j \in \Ic_i, \\
0 \qquad &\text{on} \; \Tilde U_{0,1,2}.
\end{cases}
\end{align}

We will show in \cref{sub: the cochain maps} that the image $\Xi^{p,q}(\alpha)$ is weakly differentiable and that these maps satisfy the properties of a cochain map.

\subsection{Constructing an open cover with tubular neighborhoods} \label{subsection: constructing open cover}

Given a simplicial complex, we want to construct an open cover for the \v{C}ech-de Rham complex. We can accomplish this by considering subsets of tubular neighborhoods of the lower-dimensional simplices.

Let $\Omega$ be an $(n-p)$-dimensional submanifold of $\R^n$, e.g. a simplex with codimension $p$. For a given $x \in \Omega$, we say that a vector $n \in T_x \R^n$ is \emph{normal to $\Omega$} if $ \langle n, v \rangle =0$ for each $v \in T_x \Omega$, where $\langle \cdot , \cdot \rangle$ is the Euclidean inner product on $\R^n$. We define the \emph{normal space} of $\Omega$ at $x$ to be the following vector space:
\begin{equation}
N_x \Omega = \{n \in T_x \R^n : \langle n,v \rangle = 0, \forall v \in T_x \Omega \}.
\end{equation}
The \emph{normal bundle} is then the disjoint union of all the normal spaces:
\begin{equation}
    N\Omega = \coprod_{x \in \Omega} N_x \Omega.
\end{equation}

One can describe the normal bundle without referring to an underlying Euclidean space, where we instead consider a Riemannian manifold $M$ with a submanifold $\Omega$ and replacing the Euclidean inner product with a Riemannian metric $g$.

Moreover, this construction can be done more generally without the use of a Riemannian metric for smooth manifold, by defining the normal bundle as the quotient bundle $N\Omega = TM|_\Omega/T\Omega$. This defines the topology of the normal bundle.

We define the map $E: N\Omega \to \R^n$ by
\begin{equation}
E(x, v) = x + v.
\end{equation}
A \emph{tubular neighborhood} of $\Omega$ is a neighborhood $U_\Omega \subset \R^n$ containing $\Omega$, which is diffeomorphic under $E$ to 
\begin{equation}
V_\Omega = \{ (x,v) \in N\Omega : |v|< \epsilon(x) \},
\end{equation}
where $\epsilon: \Omega \to \R$ is a positive continuous function.
\begin{theorem}
Every submanifold $\Omega$ embedded into $\R^n$ has a tubular neighborhood. 
\end{theorem}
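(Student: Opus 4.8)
The plan is to realize the tubular neighborhood as the diffeomorphic image under $E$ of a suitably thin neighborhood of the zero section in $N\Omega$. The argument proceeds in three stages: a derivative computation, an application of the inverse function theorem, and a local-to-global injectivity argument, the last of which I expect to be the crux.

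First I would identify $\Omega$ with the zero section $\{(x,0) : x \in \Omega\} \subset N\Omega$ and note that $E$ restricts to the identity there, since $E(x,0) = x$. The crucial infinitesimal fact is that $dE$ is an isomorphism at every point of the zero section. At $(x,0)$ the tangent space $T_{(x,0)}N\Omega$ splits canonically as $T_x\Omega \oplus N_x\Omega$ (horizontal part from the base, vertical part from the fiber). Differentiating $E(x,v) = x+v$ along a curve in $\Omega$ sends a horizontal vector $v \in T_x\Omega$ to $v$, and differentiating along a fiber curve $t \mapsto (x,tn)$ sends a vertical vector $n \in N_x\Omega$ to $n$; under the inclusions $T_x\Omega, N_x\Omega \hookrightarrow T_x\R^n = \R^n$ this means $dE_{(x,0)}$ is simply $(v,n) \mapsto v+n$. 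Since $T_x\Omega$ and $N_x\Omega$ are complementary by the very definition of the normal space, their sum is all of $\R^n$, so $dE_{(x,0)}$ is a linear isomorphism. By the inverse function theorem, $E$ is therefore a local diffeomorphism on some open neighborhood of each point $(x,0)$.

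The remaining and most delicate step is to upgrade these local diffeomorphisms to a single open neighborhood $W$ of the entire zero section on which $E$ is globally injective, and hence a diffeomorphism onto its open image. This is where I expect the main obstacle to lie. When $\Omega$ is compact, injectivity follows from a sequential-compactness argument: were injectivity to fail on every shrinking tube, one could produce sequences $(x_k, v_k) \neq (x_k', v_k')$ with $|v_k|, |v_k'| \to 0$ and $E(x_k,v_k) = E(x_k',v_k')$; passing to convergent subsequences yields two zero-section points with the same image, necessarily a single point $(x,0)$, contradicting local injectivity of $E$ near $(x,0)$. For non-compact $\Omega$ one instead exhausts $\Omega$ by a countable family of compact sets, or argues directly using paracompactness, gluing the local injectivity neighborhoods into a global $W$.

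Finally, having secured such a $W$, I would produce the positive continuous function $\epsilon$. The neighborhood $W$ need not itself be a tube, but a standard shrinking lemma guarantees a positive continuous $\epsilon \colon \Omega \to \R$ with $V_\Omega = \{(x,v) \in N\Omega : |v| < \epsilon(x)\} \subset W$: the fiberwise supremum of admissible radii is positive and lower semicontinuous, and paracompactness of $\Omega$ allows one to slip a continuous positive function beneath it via a partition of unity. Then $E|_{V_\Omega}$ is an injective local diffeomorphism, hence a diffeomorphism onto the open set $U_\Omega := E(V_\Omega) \supset \Omega$, which is precisely the required tubular neighborhood.
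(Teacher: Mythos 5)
The paper does not actually prove this statement; it simply cites \cite[Theorem 5.2]{hirsch2012differential} and \cite[Theorem 6.17]{lee2012smooth}, so there is no in-paper argument to compare against. Your proposal is, in outline, exactly the standard proof given in those references: the computation that $dE_{(x,0)}$ acts as $(v,n)\mapsto v+n$ on $T_x\Omega\oplus N_x\Omega=\R^n$ is correct, the inverse function theorem step is correct, and your sequential-compactness argument for global injectivity in the compact case is the right one (and suffices for the paper's application, where the submanifolds are closed simplices). The one place where your sketch is genuinely under-justified is the non-compact case: ``gluing the local injectivity neighborhoods into a global $W$'' does not work as stated, because injectivity is not a local property --- $E$ can be injective on each member of an open cover of the zero section while two points lying in different members share an image. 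The standard repair (this is what Lee does) is quantitative: define $\rho(x)$ as the supremum of radii $r\leq 1$ for which $E$ is a diffeomorphism on $\{(x',v'): |x'-x|<r,\ |v'|<r\}$, show $\rho$ is positive and Lipschitz, set $\epsilon=\rho/2$, and then verify injectivity on the resulting tube directly: if $E(x,v)=E(x',v')$ with $|v|<\epsilon(x)$, $|v'|<\epsilon(x')$ and, say, $\rho(x')\leq\rho(x)$, the triangle inequality $|x-x'|\leq|v|+|v'|<\rho(x)$ forces both points into a single set on which $E$ is already known to be injective. Your final step, slipping a continuous positive $\epsilon$ under a positive lower semicontinuous supremum of admissible radii, is fine. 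So: correct strategy, complete in the compact case, but the non-compact gluing needs the injectivity-radius argument rather than an appeal to paracompactness alone.
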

For proof of this claim, see \cite[Theorem 5.2]{hirsch2012differential} or \cite[Theorem 6.17]{lee2012smooth}. This means that we are able to assign a normal tubular neighborhood to each $(n-p)$-simplex, with $p$-dimensional normal space.

There are different ways to construct the open covers for a given simplicial geometry $\{\Omega_i\}_{i \in \Ic}$. One method for constructing an open cover is to assign a normal tubular neighborhood to each lower-dimensional simplex. Subsets of these tubular neighborhoods are chosen in such a way that they glue together to a simply connected open set $U_i$.

Another construction is to consider the open set as everything which is $\epsilon$-close to $\Omega_i$. For each $n$-simplex $\Omega_i$, we take a ball with radius $\epsilon$ centered at each point on the boundary $x \in \partial\Omega_i$. We define $U_i$ as the union of $\Omega_i$ and each of the open balls:
\begin{equation}
U_i = \Omega_i \cup \bigcup_{x \in \partial \Omega_i} B(x, \epsilon) .
\end{equation}
The two methods for constructing an open cover that are described above are shown in \cref{fig: two open cover constructions}. 

\begin{figure}[htb]
    \centering
   \includegraphics[scale=0.46]{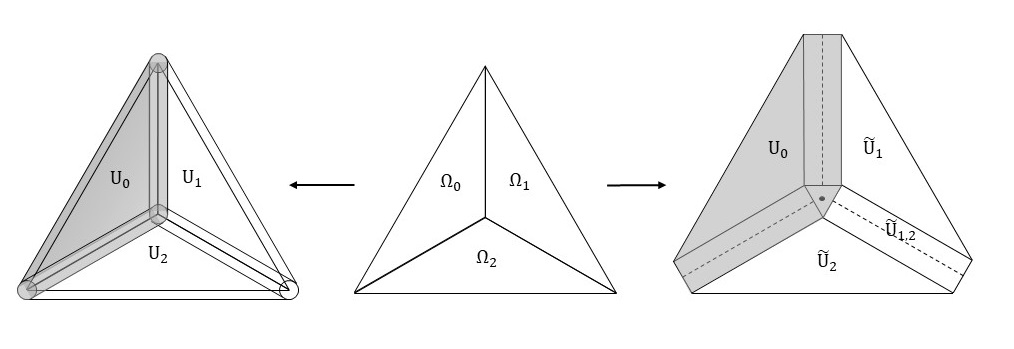}
    \caption{The figure illustrates two possible methods for constructing an open cover for a given simplicial geometry: assigning an $\epsilon$-radius (left), or "gluing" together subsets of tubular neighborhoods (right).}
    \label{fig: two open cover constructions}
\end{figure}

\textbf{The following summarizes the assumptions we put on the open cover $\Uc$}:
\begin{itemize}
    \item The indexing of the open cover $\Uc = \{U_i\}_{i \in \Ic}$ corresponds one-to-one with the indexing of $\{\Omega_i\}_{i \in \Ic}$.
    \item Each intersection $U_{i_0, ..., i_p}$ corresponds to a $(n-p)$-simplex $\Omega_{i_0, ..., i_p}$.
    \item The intersections $U_{i_0, ..., i_p}$ have a well-defined tangential direction and normal direction, as well as a positive distance function $\epsilon_i$.
    \item For $i \in \Ic^p$, we have that $\partial \Tilde{U}_i \cap \partial \Tilde{U}_j$ has zero $(n-1)$-dimensional measure for all $j \in \Ic_i^{p+s}$, where $s \geq 2$.
\end{itemize}
In the final assumption, the integer $s = |j| - |i|$ takes the role as the difference between the degree of overlap of $U_i$ and $U_j$. This assumption plays an important role in the next subsection, where we define the desired cochain map and show that the image of this cochain map is weakly differentiable. 
\subsection{The cochain maps}\label{sub: the cochain maps}
We now begin to describe the cochain map $\Xi^{p,q}$ locally for $i \in \Ic^p$. The cochain map is then obtained by assembly for each $i \in \Ic^p$.

We have the following formula for the cochain map:
\begin{align}\label{eq: cochain map} 
\Xi^{p,q}(\alpha)_i =   
\phi_{j}^* (\tr^{s} \; \alpha_i) \qquad &\text{on} \; \Tilde{U}_{j}, \qquad i \in \Ic^{p}, \; j \in \Ic_i^{p+s},
\end{align}
where $s = |j| - |i| \in \{0 ,... , n-p\}$ is the difference in degree of overlap between $U_i$ and $U_j$. We can write it explicitly for $\Tilde{U}_i$ and each $\Tilde{U}_{j} \subset \Tilde{U}_i$ in the following way:
\begin{align}  
\Xi^{p,q}(\alpha)_i = 
\begin{cases}   
\phi^*_{i} \alpha_i \qquad &\text{on} \; \Tilde U_{i}, \qquad i \in \Ic^p, \\
\vdots  \\
\phi_{j}^* (\tr^{s} \; \alpha_i) \qquad &\text{on} \; \Tilde{U}_{j}, \qquad j \in \Ic_i^{p+s}, \\
\vdots  \\
\phi_{m}^* (\tr^{n-p} \; \alpha_i) \qquad &\text{on} \; \Tilde{U}_{m},\qquad m \in \Ic_i^{n}. \\
\end{cases}
\end{align}

In order for us to prove the cochain property for $\Xi$, we first need to show that everything in the image of $\Xi$ permits a weak derivative.

\begin{proposition}\label{prop: weak derivative}
For any $\alpha \in \Sc^{p,q} = \prod_{i \in \Ic^p} H\Lambda^q(\Omega_i, \tr)$, we have that $\Xi^{p,q}(\alpha) \in \prod_{i \in \Ic^p} H\Lambda^q(U_i)$.  
\end{proposition}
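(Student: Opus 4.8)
The plan is to fix $i \in \Ic^p$ and prove $\Xi^{p,q}(a)_i \in H\Lambda^q(U_i)$ for that single component; since $\Xi^{p,q}$ is defined componentwise and $\Ic^p$ is finite, the product statement follows. Recall from \cref{eq: cochain map} that $\Xi^{p,q}(a)_i$ is assembled piecewise over the decomposition $U_i = \bigcup_{s=0}^{n-p}\bigcup_{j \in \Ic^{s-1}} \tilde{U}_{i,j}$, where the value $s=0$ (with $\Ic^{-1} = \varnothing$) contributes the bulk piece $\tilde{U}_i$, the pieces being mutually disjoint away from their common boundaries, and on each piece the form equals $\phi_{i,j}^*(\tr^s\,\alpha_i)$. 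Since $L^2$-membership on $U_i$ is immediate (finitely many $L^2$ pieces), the real content is that the weak exterior derivative of the assembled form exists in $L^2\Lambda^{q+1}(U_i)$. I would establish this in three steps: (i) each piece lies in $H\Lambda^q$ of that piece; (ii) the traces of adjacent pieces agree across the codimension-one interfaces separating them; (iii) a gluing criterion then upgrades this piecewise regularity to global regularity.

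For step (i), the recursive definition of the enhanced-trace subcomplex in \cref{eq: regular trace} guarantees $\tr^s\,\alpha_i \in H\Lambda^q(\Omega_{i,j}, \tr) \subset H\Lambda^q(\Omega_{i,j})$, so this iterated trace has a weak exterior derivative in $L^2$. The map $\phi_{i,j}$ exhibits $\tilde{U}_{i,j}$ as a subset of a product of $\Omega_{i,j}$ with its normal fibres, and its pullback both commutes with $d$ and carries $L^2$ into $L^2$ because the extension is constant along those fibres; hence $\phi_{i,j}^*(\tr^s\,\alpha_i) \in H\Lambda^q(\tilde{U}_{i,j})$.

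For step (ii), two pieces $\tilde{U}_{i,j}$ and $\tilde{U}_{i,j'}$ meet along a codimension-one interface $\Gamma$ precisely when $\Omega_{i,j'}$ is a boundary face of $\Omega_{i,j}$, i.e.\ when $j'$ extends $j$ by a single index and $s' = s+1$; in particular the bulk piece abuts each single-index piece. On $\Gamma$ the form coming from $\tilde{U}_{i,j}$ is constant along the normals to $\Omega_{i,j}$, so its trace (pullback) to $\Gamma$ is determined by $\tr^s\,\alpha_i$; because the iterated trace satisfies $\tr^{s+1}\alpha_i = \tr(\tr^s\,\alpha_i)$, namely the trace of $\tr^s\,\alpha_i$ onto the face $\Omega_{i,j'}$, this coincides with the trace to $\Gamma$ of $\phi_{i,j'}^*(\tr^{s+1}\alpha_i)$ from the $\tilde{U}_{i,j'}$ side. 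Thus the two one-sided traces agree and no tangential jump is created across $\Gamma$ (and when $q$ exceeds $\dim \Omega_{i,j'}$ both sides vanish there, consistent with the zero entries in \cref{section: example}). This is the step I expect to be the main obstacle: one must track carefully how the constant-normal extension interacts with the iterated trace at a shared interface, and confirm the geometric compatibility between the normal directions of $\Omega_{i,j}$ and those of its face $\Omega_{i,j'}$, so that the restriction from the shallower piece and the constant extension from the deeper piece genuinely induce the same pullback on $\Gamma$.

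For step (iii), I would invoke the $H\Lambda$ analogue of the elementary fact that a piecewise-$H^1$ function with matching traces across interfaces is globally $H^1$: a form that is piecewise in $H\Lambda^q$ over a finite decomposition with measure-zero interfaces, and whose traces match across every codimension-one interface, belongs to $H\Lambda^q$ of the union, with weak derivative equal to the piecewise exterior derivative, the interface jumps being exactly the distributional defect that would otherwise arise. Here the final assumption imposed on the open cover, that $\partial \tilde{U}_i \cap \partial \tilde{U}_j$ has measure zero whenever $j \in \Ic^{p+s}$ with $s \geq 2$, is precisely what ensures that pieces which are not codimension-one adjacent meet only in negligible sets and hence contribute no distributional terms, so that only the interfaces controlled in step (ii) are relevant. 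Once the trace-matching of step (ii) is secured, steps (i) and (iii) are routine applications of pullback functoriality and the gluing criterion.
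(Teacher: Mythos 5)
Your proposal is correct and follows essentially the same route as the paper: the crux in both is that the constant-normal extension and the compatibility $\tr^{s+1}\alpha_i = \tr(\tr^s\alpha_i)$ force the one-sided traces on each interface $\Gamma_{l,m}$ to agree, with the measure-zero assumption disposing of non-adjacent pieces. The only difference is presentational — where you invoke a gluing criterion for piecewise-$H\Lambda$ forms with matching traces, the paper proves that step inline via integration by parts and pairwise cancellation of the interface integrals $I_{l,m} = -I_{m,l}$.
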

\begin{proof}
We prove this for arbitrary fixed $p,q$ and $i \in \Ic^p$. We need to show that for a given $\alpha_i \in H\Lambda^q(\Omega_i, \tr)$, then $\hat{\alpha} = \Xi^{p,q}(\alpha_i)$ admits a weak derivative. We show this by proving that the right-hand side of the following equation exists:
\begin{equation}
\int_{U_i} d\hat{\alpha} \wedge \beta = (-1)^{q+1} \int_{U_i} \hat{\alpha} \wedge d\beta, \qquad \beta \in C^\infty_0\Lambda^{n-q-1}(U_i).
\end{equation}
In the expression above, $\beta$ is a smooth test-form which vanishes on the boundary of $U_i$, and it has degree $n-q-1$ so that $d\hat{\alpha} \wedge \beta$ and $\hat{\alpha} \wedge d\beta$ has degree $n$. Integration by parts and Stokes theorem gives us the following formula:
\begin{equation}
\int_{U_i} d\hat{\alpha} \wedge \beta = (-1)^{q+1} \int_{U_i}\hat{\alpha} \wedge d\beta + \int_{\partial U_i} \tr \: \hat{\alpha} \wedge \tr \: \beta.
\end{equation}

We write $\Ic_{i,+} = \Ic_i \cup \{i\}$, where $\Ic_i$ denotes all multi-indices containing $i$. If we show that 
\begin{equation}\label{eq: boundary integrals vanishes}
\sum_{j \in \Ic_{i,+}} \int_{\partial \Tilde{U}_j} \tr \: \hat{\alpha}_j \wedge \tr \: \beta_j = 0,
\end{equation}
then we can use the decomposition of $U_i$ given by \cref{eq: u tilde} to obtain the integration by parts formula:
\begin{equation}\label{eq: weak derivative of alpha}
\int_{U_i} d\hat{\alpha} \wedge \beta = (-1)^{q+1} \int_{U_i} \hat{\alpha} \wedge d\beta =  (-1)^{q+1} \sum_{j \in \Ic_{i,+}} \int_{\Tilde{U}_j} \hat{\alpha}_j \wedge d\beta_j = \sum_{j \in \Ic_{i,+}} \int_{\Tilde{U}_j} d\hat{\alpha}_j \wedge \beta_j.
\end{equation}
The first equality is the definition of weak derivative, the second equality is using the countable additivity of the Lebesgue integral, and the last equality holds if we show \cref{eq: boundary integrals vanishes}. 

We write $\Gamma_{i,j}$ for the boundary $\partial \Tilde{U}_i \cap \partial \Tilde{U}_j$, $j \in \Ic_i^{p+1}$, with orientation induced by the first index (each $\Tilde{U}_j$ is $n$-dimensional and inherit the standard orientation from $\R^n$). The test form $\beta$ vanishes on the boundary of $U_i$, hence we only need to account for the boundaries of $\Tilde{U}_j$ that are interior in $U_i$.

\begin{equation}\label{eq: int 2.3.1}
\int_{\partial \Tilde{U}_i} \tr \: \hat{\alpha} \wedge \tr \: \beta = \sum_{j \in \Ic_i^{p+1}} \int_{\Gamma_{i,j}} \tr \; \hat{\alpha} \wedge \tr \: \beta.
\end{equation}
For each integral in the right-hand side of \cref{eq: int 2.3.1}, there is a corresponding one from $\int_{\partial \Tilde{U}_j} \tr \: \hat{\alpha} \wedge \: \beta$ with opposite orientation due to different direction for the outward-pointing normal vector.

That is, for each integral from the boundary $\partial \Tilde{U}_i$,
\begin{equation}
I_{i,j} = \int_{\Gamma_{i,j}} \tr \: \hat{\alpha} \wedge \tr \: \beta = \int_{\Gamma_{i,j}} \tr \: \phi^*_i (\alpha_i) \wedge \tr \: \beta,
\end{equation}
there is a corresponding integral from $\partial \Tilde{U}_j$ with opposite orientation:
\begin{equation}\label{eq: int j i}
I_{j,i} = \int_{\Gamma_{j,i}} \tr \: \hat{\alpha} \wedge \tr \: \beta  = \int_{\Gamma_{j,i}} \tr \: \phi_j^* (\tr \: \alpha_i)  \wedge \tr \, \beta.
\end{equation}
By the definition of the pullback $\phi_j^*$, it extends the value of $\tr \; a$ constantly in the direction normal to the boundary $\Gamma_{i,j}$. The two trace operators in \cref{eq: int j i} applies the same restriction of the tangent bundle (a projection normal to the boundary $\Gamma_{i,j}$). The integrands are therefore equal, and since the integrals have opposite orientation, they cancel each other out.

More generally, we compare the integral
\begin{equation}
I_{l,m} = \int_{\Gamma_{l,m}} \tr \: \phi_l^* (\tr^r \alpha_i) \wedge \tr \: \beta
\end{equation}
with its counterpart
\begin{equation}
I_{m,l} = \int_{\Gamma_{m,l}} \tr \: \phi_m^* (\tr^{r+1} \alpha_i) \wedge \tr \: \beta.
\end{equation}
Here, $l \in \Ic^{p+r}$ and $m \in \Ic^{p+r+1}_l$, and the integrals $I_{l,m}$ and $I_{m,l}$ comes from
$\partial \Tilde{U}_l$ and $\partial \Tilde{U}_m$, respectively. The expression $\phi_m^* (\tr^{r+1} \alpha_i)$ is constant in $p+r+1$ directions, including the normal direction to the boundary $\Gamma_{m,l}$. On the other hand, $\phi_l^*(\tr^r \; \alpha_i)$ is not constant normal to $\Gamma_{l,m}$. The two expressions inside the integrals have therefore the same codimension and are equal.

We conclude that the integrals $I_{l,m}$ and $I_{m,l}$ satisfies $I_{l,m} = - I_{m,l}$ for every $l \in \Ic^{p+r}_{i, +}$, $m \in \Ic^{p+r+1}_l$, and we have satisfied \cref{eq: boundary integrals vanishes}. As a consequence, \cref{eq: weak derivative of alpha} holds, and hence we have shown that the image of the cochain map permits a weak derivative.
\end{proof}

The fact that all the interior boundary integrals vanish plays an important role in the next proposition.
\begin{proposition}
\cref{eq: cochain map} describes an injective bigraded cochain map from the simplicial de Rham complex to the truncated \v{C}ech-de Rham complex. That is, for each $p,q \geq 0$ satisfying $p+q \leq n$, $\Xi^{p,q}: \Sc^{p,q} \to \Ac^{p,q}$ satisfies
\begin{equation}
\Xi^{p,q+1} d_\Sc = d_\Ac \Xi^{p,q}, \qquad \Xi^{p+1, q} \delta_\Sc = \delta_\Ac \Xi^{p,q}.
\end{equation}
\end{proposition}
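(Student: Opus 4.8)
The statement packages four claims: that each $\Xi^{p,q}$ lands in $\Ac^{p,q}$, that $\Xi$ commutes with the exterior derivatives, that it commutes with the jump/difference operators, and that it is injective. The first is nothing new: \cref{prop: weak derivative} says precisely that $\Xi^{p,q}(a)\in\prod_{i\in\Ic^p}H\Lambda^q(U_i)=\Ac^{p,q}$, so each component is a bona fide element of the Sobolev \v{C}ech-de Rham complex. The plan for the remaining three claims is to verify every identity \emph{piecewise} on the decomposition $U_i=\bigsqcup_{j}\Tilde U_{i,j}$ of \cref{eq: u tilde} and then assemble, using \cref{prop: weak derivative} to guarantee that no distributional boundary terms obstruct the assembly.

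For the vertical relation $d_\Ac\Xi^{p,q}=\Xi^{p,q+1}d_\Sc$ I would work on a single piece $\Tilde U_{i,j}$, where $\Xi^{p,q}(\alpha)_i=\phi^*_{(i,j)}(\tr^{s}\alpha_i)$. Both building blocks of this expression commute with $d$: the pullback $\phi^*_{(i,j)}$ does so because pullback along any smooth map commutes with the exterior derivative, and each trace does so because on the regularity subcomplex $H\Lambda^\bullet(\cdot,\tr)$ it is the pullback of an inclusion. Hence $d\big(\phi^*_{(i,j)}\tr^{s}\alpha_i\big)=\phi^*_{(i,j)}\tr^{s}(d\alpha_i)$, which is exactly $\Xi^{p,q+1}(d_\Sc\alpha)_i$ on $\Tilde U_{i,j}$. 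Since \cref{prop: weak derivative} shows the global weak derivative of $\Xi^{p,q}(\alpha)_i$ is computed piece-by-piece with vanishing interior boundary integrals, these local identities glue to the global one.

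The heart of the argument is the horizontal relation $\delta_\Ac\Xi^{p,q}=\Xi^{p+1,q}\delta_\Sc$, which I would prove by evaluating both sides on a fixed $i\in\Ic^{p+1}$ and a fixed piece $\Tilde U_{i,j}$ with $j\in\Ic^{s-1}$, writing $i'=(i_0,\dots,\hat i_l,\dots,i_{p+1})\in\Ic^p$ for the index with $i_l$ deleted. On the right, linearity of $\Xi^{p+1,q}$ and the definition of the jump operator give $\Xi^{p+1,q}(\delta_\Sc\alpha)_i=\sum_{l}(-1)^{k+l}\phi^*_{(i,j)}\big(\tr^{s}\tr_i\,\alpha_{i'}\big)$; because restriction is functorial (so iterated traces are path-independent on $H\Lambda^\bullet(\cdot,\tr)$), the composite $\tr^{s}\tr_i$ is the iterated trace $\Omega_{i'}\to\Omega_{i,j}$, equal to $\tr^{s+1}$. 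On the left, $\delta_\Ac(\Xi^{p,q}\alpha)_i=\sum_{l}(-1)^{k+l}(\Xi^{p,q}\alpha)_{i'}\big|_{U_i}$, and the key bookkeeping step is that $\Tilde U_{i,j}$ is exactly the summand of the decomposition of $U_{i'}$ attached to the overlap $(i,j)\supseteq i'$; the defining formula then yields $(\Xi^{p,q}\alpha)_{i'}|_{\Tilde U_{i,j}}=\phi^*_{(i,j)}(\tr^{t}\alpha_{i'})$ with trace count $t=\operatorname{codim}\Omega_{i,j}-\operatorname{codim}\Omega_{i'}=(p+1+s)-p=s+1$. Both sides thus reduce to $\sum_{l}(-1)^{k+l}\phi^*_{(i,j)}(\tr^{s+1}\alpha_{i'})$, and the signs agree automatically since both difference operators carry the factor $(-1)^{k+l}$ with the same total degree $k=p+q$.

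Injectivity I would settle last and quickly: if $\Xi^{p,q}(\alpha)=0$ then on each generic piece $\Tilde U_i$ we have $\phi^*_i\alpha_i=0$, and composing with the inclusion $\iota_i:\Omega_i\hookrightarrow\Tilde U_i$, for which $\phi_i\circ\iota_i=\id_{\Omega_i}$, gives $\alpha_i=\iota_i^*\phi_i^*\alpha_i=0$ on the (dense) interior of $\Omega_i$, hence $\alpha=0$. The main obstacle is entirely in the horizontal step: correctly matching the piece $\Tilde U_{i,j}$ viewed inside $U_i$ with the corresponding summand of $U_{i'}$, and confirming that deleting an index raises the trace count by exactly one while leaving the pullback $\phi^*_{(i,j)}$ and the alternating signs unchanged. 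Everything else is a routine consequence of the naturality of pullback and trace with respect to $d$ and of \cref{prop: weak derivative}.
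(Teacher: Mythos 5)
Your treatment of the two commutation relations follows essentially the same route as the paper: verify everything piecewise on the decomposition $U_i = \bigcup_j \Tilde U_{i,j}$, use that pullbacks and traces (as pullbacks of inclusions) commute with $d$, use linearity to pass the alternating sum through $\phi^*\tr^s$, and invoke \cref{prop: weak derivative} to glue. Your bookkeeping in the horizontal step (matching $\Tilde U_{i,j}\subset U_i$ with the corresponding summand of $U_{i'}$ and checking that deleting an index raises the trace count from $s$ to $s+1$) is in fact more explicit than the paper's, which simply writes $\tr^{s+1}(\alpha_{i\setminus l})$ on one side and $\tr^s(\tr\,\alpha_{i\setminus l})$ on the other and appeals to linearity. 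Your injectivity argument via $\iota_i^*\phi_i^* = \id_{\Omega_i}$ is also sound; the paper itself does not prove injectivity inside this proposition but defers it to the lower bound $\|\alpha\|_{\Sc}\leq C_2\|\Xi(\alpha)\|_{\Ac}$ in \cref{prop: bound 2}.

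There is, however, one genuine omission. The statement asserts that $\Xi$ maps into the \emph{truncated} \v{C}ech-de Rham complex, whose top term is $\ker(D^n)$ rather than $\Ac^n$. For the map to be well defined there you must verify that $\Xi^n(\Sc^n)\subset\ker D^n$, i.e.\ that for $\omega\in\Sc^{p,q}$ with $p+q=n$ both $d_\Ac\Xi^{p,q}(\omega)=0$ and $\delta_\Ac\Xi^{p,q}(\omega)=0$. This does not follow from the commutation relations alone, since $\Sc^{p,q}$ has no nonzero successor in total degree $n+1$ while $\Ac^{p,q}$ does. The paper devotes the final part of its proof to exactly this point: because $\omega_i$ is a volume form on $\Omega_i$, its traces vanish, so $\Xi^{p,q}(\omega)_i$ is supported only on $\Tilde U_i$ and its restriction to every overlap is zero, killing $\delta_\Ac$; and the extension is constant in the $p$ normal directions while already of top tangential degree, so every term $\frac{\partial w_i}{\partial x_l}\,dx_l\wedge dx_i$ in the exterior derivative vanishes, killing $d_\Ac$. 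You should add this verification to complete the argument.
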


\begin{proof}
We let $\alpha = (\alpha_i)_{i \in \Ic^p} \in \Sc^{p,q}$ be an arbitrary differential form of degree $(p, q)$. First, we want to show that the cochain map $\Xi^{p,q}$ commutes with the exterior derivative, for an arbitrary choice of $p$ and $q$. That is, $d \Xi^{p,q}(\alpha) = \Xi^{p,q+1} (d\alpha)$.

On one hand, applying the cochain map followed by the exterior derivative yields the following:
\begin{align}   
d\Xi^{p,q}(\alpha)_i = 
\begin{cases}   
d\phi^*_{i} \alpha_i \qquad &\text{on} \; \Tilde U_{i}, \qquad i \in \Ic^p, \\
\vdots  \\
d\phi_{j}^* (\tr^{s} \; \alpha_i) \qquad &\text{on} \; \Tilde{U}_{j}, \; \qquad j \in \Ic_i^{p+s}, \\
\vdots  \\
d\phi_{m}^* (\tr^{n-p} \; \alpha_i) \qquad &\text{on} \; U_{m},\qquad m \in \Ic_i^{n}. \\
\end{cases}
\end{align}

On the other hand, 
\begin{align}   
\Xi^{p,q}(d\alpha)_i = 
\begin{cases}   
\phi^*_{i} d\alpha_i \qquad &\text{on} \; \Tilde U_{i}, \qquad i \in \Ic^p, \\
\vdots  \\
\phi_{j}^* (\tr^{s} \; d\alpha_i) \qquad &\text{on} \; \Tilde{U}_{j}, \; \qquad j \in \Ic_i^{p+s}, \\
\vdots  \\
\phi_{m}^* (\tr^{n-p} \; d\alpha_i) \qquad &\text{on} \; U_{m}, \; \qquad  m \in \Ic_i^{n}. \\
\end{cases}
\end{align}
We have shown in \cref{prop: weak derivative} that due to cancellation of terms, the interior boundaries don't contribute when we consider the decomposition of $U_i$. We have equality between the expressions since the exterior derivative commutes with pullbacks, and the trace operators are the pullback maps of inclusions, hence also commute with the exterior derivative. 

Secondly, we want to show that the cochain map commutes with the difference and jump operator, i.e. that  $\delta_{\Ac} \Xi^{p,q}(\alpha) = \Xi^{p+1, q} (\delta_\Sc \alpha)$. Recall that the difference operator is given by the following:
\begin{align}
(\delta \alpha)_i &=  \sum_{l=0}^{p+1} (-1)^{k+l}  \; \alpha_{i_0, ..., \hat{i}_l, ..., i_{p+1}}|_{U_i}, &
\forall i \in \mathcal{I}^{p+1}.
\end{align}
The jump operator $\delta_\Sc$ is structurally similar to the difference operator $\delta_\Ac$, where the dissimilarity is that we take the trace onto boundaries instead of restricting to overlaps:
\begin{align}
(\delta_\Sc \alpha)_i &= \sum_{l=0}^{p+1} (-1)^{k+l} \tr \; \alpha_{i_0, ..., \hat{i}_l, ..., i_{p+1}}, &
\forall i \in \mathcal{I}^{p+1}.
\end{align}

We let $i \in \Ic^{p+1}$ and $j \in \Ic_i^{p+s}$ (where $s$ takes the same role as in \cref{eq: cochain map}) and write $i \setminus l$ as short-hand for $(i_0, ..., \hat{i}_l, ..., i_{p+1}) \in \Ic^{p}_i$. By applying $\Xi$ first, then the difference $\delta_\Ac$ is defined locally on $\Tilde{U}_{i}$ by
\begin{align}   
(\delta_\Ac \Xi^{p, q}(\alpha))_i = 
\sum_{l=0}^{p+1} (-1)^{k+l}  (\Xi^{p,q}(\alpha))_{i \setminus l}|_{U_i} =  \sum_{l=0}^{p+1} (-1)^{k+l} \phi^*_i \tr (\alpha_{i \setminus l}), \qquad \text{on} \; \Tilde{U}_{i}.
\end{align}
More generally on $\Tilde{U}_{j}$, we have
\begin{align}   
\delta_\Ac (\Xi^{p, q}(\alpha))_i 
 = \sum_{l=0}^{p+1} (-1)^{k+l}  \phi^*_{j} \tr^{s+1} (\alpha_{i \setminus l}), \qquad \text{on} \; \Tilde{U}_{j}.
\end{align}

If we employ the jump operator $\delta_\Sc$ first, followed by the cochain map, we have the following:
\begin{align}
\Xi^{p+1,q}(\delta_\Sc \alpha)_i = \phi_{j}^* \tr^s (\delta_\Sc \alpha)_i  = \phi_{j}^* \tr^s \left(\sum_{l=0}^{p+1} (-1)^{k+l} \tr \; \alpha_{i \setminus l} \right)_i, \qquad \text{on} \; \Tilde{U}_{j}.
\end{align}
Because the pullback (and hence also the trace) is a linear operator, it commutes with the finite sum and hence we have equality between the two expressions. We therefore conclude that the cochain maps commute with the jump/difference operator. 

Finally, we verify that the image of the cochain map $\Xi^n$ is a subset of $\ker D^n$. We let $\omega \in \Sc^{p,q}$ for $p+q=n$, and consider the image $\Xi^{p,q}(\omega)$. If $\omega \in \Sc^{0,n}$, then clearly the exterior derivative of $\omega$ vanishes, since we cannot have differential forms of degree $n+1$ in $n$-dimensional space. If $\omega \in \Sc^{p,q}$, with $p+q=n$ and $q<n$, then $\omega_i$ is a volume form on $\Omega_i$. Hence, $\omega_i$ and $\Xi^{p,q}(\omega_i)$ are $q$-forms with just one component. The differential form is extended constantly in the normal directions given by the normal tubular neighborhood, i.e. the function component of $\omega$ is constant in $p$ variables. We have the following expression for the exterior derivative of $\omega_i$ using local coordinates:
\begin{equation}
d\omega_i = \sum_{l=1}^n \frac{\partial w_i}{\partial x_l} dx_l \wedge dx_i.
\end{equation}
For each $l \in \{1, ..., n\}$, either the partial derivative $\frac{\partial w_i}{\partial x_l}$ is zero because $w_i$ is constant in the normal direction, or the covector $dx_l$ is already in $dx_i$. Each summand is therefore zero, and hence the exterior derivative of any differential form in the image of $\Xi^n$ must be zero.

We also require that everything in the image $\Xi^n(\omega)$ is mapped to zero by the difference operator $\delta_\Ac$. For $\omega \in \Sc^{p,q}$, the extension $\Xi^{p,q}(\omega)_i$ is nonzero only on $\Tilde{U}_i$, and vanishes on overlaps of higher degree:

\begin{align}
\Xi^{p,q}(\omega)_i = 
\begin{cases}   
\phi^*_{i} \omega_i \qquad &\text{on} \; \Tilde U_{i}, \qquad i \in \Ic^p, \\
\vdots  \\
\phi_{j}^* (\tr^{s} \; \omega_i) = 0 \qquad &\text{on} \; \Tilde{U}_{j}, \qquad  j \in \Ic_i^{p+s}, \\
\vdots  \\
\phi_{m}^* (\tr^{n-p} \; \omega_i) = 0 \qquad &\text{on} \; U_{m}, \qquad m \in \Ic_i^{n}. \\
\end{cases}
\end{align}
Since $\omega_i$ is a volume form on $\Omega_i$, its trace is zero, and hence $\Xi^{n}(\omega)_i$ is only nonzero on $\Tilde U_{i}$. When we apply $\delta_\Ac$ we get that each $\Xi^{n}(\omega)_i$ is zero on overlaps, and hence the image of $\delta_\Ac^n$ is zero. We therefore conclude that the cochain map described is compatible with the truncation of the \v{C}ech-de Rham complex.

We have shown that the cochain maps in \cref{eq: cochain map} commute with the exterior derivative, commute with the jump/difference operators and that the image of the last cochain map $\Xi^n$ is in $\ker D^n$. 
\end{proof}

\section{Boundedness of the cochain map} \label{section: properties of the cochain map}
We have in the previous section defined a cochain map from the simplicial de Rham complex to the \v{C}ech-de Rham complex, and we continue in this section by proving that $\Xi$ is bounded from both above and below. In order to do so we need to introduce the various norms associated with the two complexes. Since we are working with Hilbert complexes, each Hilbert complex has an associated inner product, and the associated inner product induces a norm $\| \alpha \| = \langle \alpha, \alpha \rangle_{C^k}^{1/2}$.

Recall that the graph norm of a cochain complex is defined as follows: 
\begin{equation}
\| \alpha \|_{D}^2 = \|\alpha\|^2 + \|D\alpha\|^2,
\end{equation}
where we have omitted the subscript for the $L^2$ norms. Locally, the graph norm of the simplicial de Rham complex takes the following form: 
\begin{equation}
\|\alpha_i\|_{\Sc^{p,q}_i}^2 = \|\alpha_i\|^2 + \|d\alpha_i\|^2 +  \sum_{j \in \Ic_i}  \|\tr_j \alpha_i\|^2_{\Sc^{p+1,q}_j}. \label{eq: recursive sdr norm}
\end{equation}
We define a local graph norm for the \v{C}ech-de Rham complex to have the same recursive form as \cref{eq: recursive sdr norm}:
\begin{equation}
\|\beta_i\|_{\Ac^{p,q}_i}^2 = \|\beta_i\|^2 + \|d\beta_i\|^2 +  \sum_{j \in \Ic_i}  \|\beta_i|_{U_j}\|^2_{\Ac^{p+1,q}_j}. 
\end{equation}
As shorthand, we use subscripts $\Sc$ and $\Ac$ for the graph norms of the two double complexes and omit the indices.

\subsection{Proof of boundedness}
\begin{proposition}\label{prop: bounded 1}
$\Xi^{p,q}$ is a bounded cochain map, meaning there exists a constant $C_2$ (dependent on the thickness $\epsilon$) such that
\begin{equation}\label{eq: xi bounded above}
\|\Xi^{p,q}(\alpha) \|_{\Ac} \leq C_2 \| \alpha \|_{\Sc}, \qquad \forall \alpha \in \Sc^{p,q}.
\end{equation}
\end{proposition}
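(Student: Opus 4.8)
The plan is to reduce the global bound to a local one and then run an induction on codimension driven by the recursive structure of the two graph norms. Since both graph norms are sums over $i \in \Ic^p$ of the local norms and $\Xi^{p,q}$ acts component-wise ($\Xi^{p,q}(\alpha)_i$ depends only on $\alpha_i$), it suffices to prove a local estimate $\|\Xi^{p,q}(\alpha)_i\|_{\Ac_i} \leq C_i \|\alpha_i\|_{\Sc_i}$ for each $i$, and then take $C_1 = \max_i C_i$, which is finite because $\Ic$ is finite. The basic analytic input is an $L^2$-estimate for the defining pullbacks: each $\phi_{i,j} \colon \Tilde{U}_{i,j} \to \Omega_{i,j}$ is the normal projection of a subset of a tubular neighborhood, and $\phi_{i,j}^*$ extends a form constantly along the normal fibers, so by Fubini over the fibration $\Tilde{U}_{i,j} \to \Omega_{i,j}$ the pointwise norm of $\phi_{i,j}^*\eta$ is constant along fibers and equal to that of $\eta$. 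Hence
\begin{equation}
\|\phi_{i,j}^*\eta\|_{L^2(\Tilde{U}_{i,j})}^2 \leq C_{i,j}\,\|\eta\|_{L^2(\Omega_{i,j})}^2,
\end{equation}
where $C_{i,j}$ bounds the fiber measure and the Jacobian of $E$; these are finite since $\epsilon$ is a positive continuous function on a finite complex.

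The key step is the restriction identity
\begin{equation}
\Xi^{p,q}(\alpha)_i|_{U_j} = \Xi^{p+1,q}(\tr_j \alpha_i)_j, \qquad j \in \Ic_i .
\end{equation}
On each piece $\Tilde{U}_{j,j''} \subset U_j$ both sides are the pullback of an iterated trace of $\alpha_i$ by the same normal projection: the iterated traces agree because $\tr^{s}(\tr_j \alpha_i) = \tr^{s+1}\alpha_i$ factors the first trace through $\Omega_j$, and the two projections coincide on the overlap since both extend constantly along the same normal directions. This identity is precisely what couples the recursion of the \v{C}ech graph norm to that of the simplicial graph norm.

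With these in hand I induct downward on the codimension $n-p$. In the base case $\Ic_i = \varnothing$ (deepest simplices) the local \v{C}ech norm reduces to $\|\Xi^{p,q}(\alpha)_i\|^2 + \|d\Xi^{p,q}(\alpha)_i\|^2$; the first term is bounded by the pullback estimate on the single piece $\Tilde{U}_i$, and the second by the same estimate after writing $d\Xi^{p,q}(\alpha)_i = \Xi^{p,q+1}(d\alpha)_i$ (commutation with $d$, established in the previous proposition) together with $\tr^s d = d\,\tr^s$. For the inductive step I expand
\begin{equation}
\|\Xi^{p,q}(\alpha)_i\|_{\Ac_i}^2 = \|\Xi^{p,q}(\alpha)_i\|^2 + \|d\Xi^{p,q}(\alpha)_i\|^2 + \sum_{j\in\Ic_i}\|\Xi^{p,q}(\alpha)_i|_{U_j}\|_{\Ac_j}^2 ,
\end{equation}
bound the first two terms by $C\big(\|\alpha_i\|^2 + \|d\alpha_i\|^2\big)$ plus contributions supported on the deeper pieces, and control each summand by the restriction identity and the inductive hypothesis,
\begin{equation}
\|\Xi^{p,q}(\alpha)_i|_{U_j}\|_{\Ac_j}^2 = \|\Xi^{p+1,q}(\tr_j\alpha_i)_j\|_{\Ac_j}^2 \leq C_j\,\|\tr_j\alpha_i\|_{\Sc_j}^2 .
\end{equation}
Summing over $j \in \Ic_i$ and comparing with $\|\alpha_i\|_{\Sc_i}^2 = \|\alpha_i\|^2 + \|d\alpha_i\|^2 + \sum_{j}\|\tr_j\alpha_i\|_{\Sc_j}^2$ yields the local bound.

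I expect the main obstacle to be the restriction identity: making it precise requires careful bookkeeping of which piece $\Tilde{U}_{i,j'}$ of $U_i$ meets $U_j$, and checking that the normal projections chosen at codimension $p$ and at codimension $p+1$ genuinely agree there, which is a compatibility requirement on the tubular neighborhoods underlying the open cover. A secondary difficulty is the $\|d\,\cdot\|$ term: one must use that the regular-trace subcomplex is preserved by $d$ and that traces commute with $d$, so that the derivative terms appearing on the \v{C}ech side are again pullbacks of iterated traces of $d\alpha_i$ and therefore already sit inside the simplicial graph norm.
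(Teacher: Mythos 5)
Your proposal is correct and follows essentially the same route as the paper: reduce to a single component, bound the constant-extension pullbacks $\phi_j^*$ in terms of the fiber thickness $\epsilon_j$, and absorb the iterated-trace terms into $\|\alpha_i\|_{\Sc}$ via the recursive definition of the simplicial graph norm. Your explicit downward induction on codimension and the restriction identity $\Xi^{p,q}(\alpha)_i|_{U_j} = \Xi^{p+1,q}(\tr_j\alpha_i)_j$ merely formalize what the paper does by directly splitting $\|\Xi^{p,q}(\alpha_i)\|_{\Ac}$ into the sum $\|\phi_i^*\alpha_i\|_{\Ac} + \sum_{j\in\Ic_i}\|\phi_j^*\tr^s\alpha_i\|_{\Ac}$ over the pieces $\Tilde{U}_j$.
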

\begin{proof}
We consider the problem for a given component $\alpha_i$, $i \in \Ic^p$. If we show that there exists a constant such that 
\begin{equation}\label{eq: xi bounded above component}
 \|\Xi^{p,q}(\alpha_i) \|_{\Ac} \leq C_2 \|\alpha_i\|_{\Sc}, \qquad \forall \alpha_i \in H\Lambda^q(\Omega_i, \tr), 
\end{equation}
then the inequality in \cref{eq: xi bounded above} follows because we have a 1-1 correspondence between the domains $\Omega_i$ and $\Tilde{U}_i$. Using the definition of $\Xi^{p,q}$, we split $\|\Xi^{p,q}(\alpha_i)\|^2_\Ac$ further into the different components of the map $\Xi^{p,q}$:
\begin{align}
\|\Xi^{p,q}(\alpha_i)\|^2_{\Ac} = \| \phi_i^* \alpha_i \|^2_{\Ac} + \sum_{j \in \Ic_i} \|\phi^*_j \tr^s \alpha_i\|^2_{\Ac}.
\end{align}
Splitting $\|\Xi^{p,q}(\alpha_i)\|^2_\Ac$ into the sum of norms works because of \cref{prop: weak derivative}. From the constructions detailed in \cref{subsection: constructing open cover}, we assume that each $U_j$ is contained in an $\epsilon$-tubular neighborhood of $\Omega_j$. Since the pullbacks are constant in the normal directions, each pullback is bounded by a function of the length $\epsilon_j$:
\begin{equation} \label{eq: boundedness of pullback}
l(\epsilon_j) \|\alpha_i\|^2_{\Sc} \leq \|\phi_j^* \alpha_i \|^2_{\Ac} \leq L(\epsilon_j) \|\alpha_i\|^2_{\Sc},
\end{equation}

We therefore have
\begin{subequations}
\begin{align}
\|\Xi^{p,q}(\alpha_i)\|^2_{\Ac} &= \| \phi_i^* \alpha_i \|^2_{\Ac} + \sum_{j \in \Ic_i} \|\phi^*_j \tr^s \alpha_i\|^2_{\Ac} \\
&\leq L(\epsilon_i) \|\alpha_i\|^2_{\Sc}  + \sum_{j \in \Ic_i} L(\epsilon_j) \|\tr^s_j \alpha_i\|^2_{\Sc}.
\end{align}
\end{subequations}
Next, we make use of the norm in \cref{eq: recursive sdr norm}. The term $\|\tr_j^s \alpha_i \|^2_{\Sc}$ is contained in the definition of the norm $\|\alpha_i\|^2_{\Sc}$ due to its recursive definition, hence we have a bound:
 \begin{equation}
\|\tr_j^s \alpha_i \|^2_\Sc \leq \|\alpha_i\|^2_\Sc.
 \end{equation}
Putting it all together, we have the following estimate:
\begin{subequations}
\begin{align}
\|\Xi^{p,q}(\alpha_i)\|^2_{\Ac} &= \| \phi_i^* \alpha_i \|^2_{\Ac} + \sum_{j \in \Ic_i} \|\phi^*_j \tr^s_j \alpha_i\|^2_{\Ac} \\
&\leq L(\epsilon_i) \|\alpha_i\|^2_{\Sc}  + \sum_{j \in \Ic_i} L(\epsilon_j) \|\tr^s_j  \alpha_i\|^2_{\Sc} \label{eq: 2.36} \\
&\leq L(\epsilon_i) \|\alpha_i\|^2_{\Sc}  + \sum_{j \in \Ic_i} L(\epsilon_j)  \|\alpha_i\|^2_{\Sc} \\
&\leq (|\Ic_i|+1)\max_{j \in \Ic_i \cup \{i\}} L(\epsilon_j) \|\alpha_i\|^2_{\Sc}. \label{eq: final ineq}
\end{align}
\end{subequations}
The final inequality is obtained by choosing the largest constant, multiplied with the number of summands plus one  for the index $i$ itself. Lastly, by taking the square root we get \cref{eq: xi bounded above}.
\end{proof}

\begin{proposition}\label{prop: bound 2}
For each pair $(p,q)$ with $0 \leq p+q \leq n$, there exists a positive constant $C_1$ (dependent on $\epsilon$) such that
\begin{equation}\label{eq: xi bounded below}
  C_1 \|\alpha\|_{\Sc} \leq \|\Xi^{p,q}(\alpha)\|_{\Ac}, \qquad \forall \alpha \in \Sc^{p,q}.
\end{equation}
In particular, the cochain map $\Xi^{p,q}: \Sc^{p,q} \to \Ac^{p,q}$ is an injection.
\end{proposition}
\begin{proof}
Again, we break it down for a single component, as the result stated in the proposition then immediately follows. Taking the square of the right-hand side, we have
\begin{equation}
\|\Xi^{p,q}(\alpha_i) \|^2_{\Ac} = \| \phi_i^* \alpha_i \|^2_{\Ac} + \sum_{j \in \Ic_i} \|\phi_j^* \tr^s \alpha_i \|^2_{\Ac} \geq \|\phi_i^* \alpha_i\|^2_{\Ac}.
\end{equation}
Using the boundedness of the pullbacks from \cref{eq: boundedness of pullback}, we have
\begin{equation}
l(\epsilon_i) \|\alpha_i\|^2_{\Sc}  \leq  \|\phi_i^* \alpha_i \|^2_{\Ac} \leq \|\Xi^{p,q}(\alpha_i)\|^2_{\Ac}.
\end{equation}
Taking the square root completes the proof.
\end{proof}
Putting \cref{eq: xi bounded above} from \cref{prop: bounded 1} and \cref{eq: xi bounded below} from \cref{prop: bound 2} together, we get
\begin{equation} \label{eq: bounded above and below}
C_1 \|\alpha\|_\Sc \leq \|\Xi^{p,q}(\alpha)\|_\Ac \leq C_2 \|\alpha\|_\Sc.
\end{equation}

\begin{remark}
We emphasize that \cref{prop: bounded 1} and \cref{prop: bound 2} together imply that the cochain map is a bijection on its range, and that the range of $\Xi$ thus defines a subcomplex of $\Ac$ which is isomorphic to $\Sc$. Moreover, an isomorphism of cochain complexes implies an isomorphism in cohomology.
\end{remark}

We may choose to define an inner product and an associated graph norm that absorbs the constants $L(\epsilon_i)$. The local norm in \cref{eq: recursive sdr norm} is induced by the inner product:
\begin{align}    
\langle \alpha_i, \beta_i \rangle_{\Sc} &= \langle \alpha_i, \beta_i \rangle + \langle d\alpha_i, d\beta_i \rangle +  \sum_{j \in \Ic_i}  \langle \tr_j \alpha_i, \tr_j \beta_i \rangle_{\Sc}.
\end{align}
The length $L(\epsilon_i)$ can be added  as a weight to the inner product
\begin{equation}
\langle \alpha_i, \beta_i \rangle_{\Sc, \epsilon_i} =  \langle L(\epsilon_i)^2 \alpha_i, \beta_i \rangle_{\Sc}.
\end{equation} 
For constant weights $L(\epsilon_i)$, the associated weighted norm is given:
\begin{equation}
\|\alpha_i\|_{\Sc, \epsilon_i} = L(\epsilon_i) \|\alpha_i\|_{\Sc}.
\end{equation}

We can now rewrite \cref{eq: 2.36} using weighted inner products:
\begin{equation}
L(\epsilon_i) \|\alpha_i\|_{\Sc}  + \sum_{j \in \Ic_i} L(\epsilon_j) \|\tr^s_j  \alpha_i\|_{\Sc} = \| \alpha_i\|_{\Sc, \epsilon_i} + \sum_{j \in \Ic_i} \|\tr^s_j \alpha_i\|_{\Sc, \epsilon_j}.
\end{equation}
Following the same arguments as in \cref{prop: bounded 1}, we obtain a proof that $\Xi$ is bounded by a constant $C_1$, which is independent on our choice of $\epsilon_i$ for the open cover.
\section{Examples}
We consider the simplicial geometry $\mathbf{\Omega}=\{\Omega_i\}_{i \in \Ic}$ to be an equilateral triangle divided into three smaller triangles, denoted $\Omega_i$, $i=0,1,2$ with an open cover as depicted in \cref{fig: simplicial cech figure} and on the right side of \cref{fig: two open cover constructions}.

Throughout this section, we use $i$ to denote an index $i \in \Ic = \{0,1,2\}$, $j$ is a multi-index $\Ic^1$ and $l \in \Ic^2$.

Recall that for $\alpha \in \Sc^{0,0}$, we have the recursively defined norm:
\begin{subequations}
\begin{align}
\|\alpha\|^2_{\Sc} &= \sum_{i \in \Ic} (\|\alpha_i\|^2 + \|d \alpha_i\|^2 + \sum_{j \in \Ic_i} \| \tr_j \alpha_i \|^2_\Sc) \\
&= \sum_{i \in \Ic} \left(  \|\alpha_i\|^2_d + \sum_{j \in \Ic_i} \left( \|\tr_j \alpha_i\|^2_d +  \sum_{l \in \Ic_j} \|\tr^2_l \alpha_i\|^2_d \right)\right),
\end{align}
\end{subequations}

where the subscript $d$ denotes the graph norm of the de Rham complex, i.e. $\|\alpha\|^2_d = \|\alpha \|^2 + \|d\alpha \|^2$. 

For the specified geometry, we have an index set $\Ic = \{0,1,2\}$. We have three norms of codimension zero (one for each domain), six norms of codimension one (each $\alpha_i$ have two possible boundaries to be restricted to) and six norms of codimension two (each $\alpha_i$ restricted to $\Omega_{0,1,2}$ twice, once for each $\Omega_{i,j}$).

On the other hand, we have the recursive norm for the \v{C}ech-de Rham complex: 
\begin{subequations}
\begin{align}
\|\Xi(\alpha)\|^2_\Ac &= \sum_{i \in \Ic} \left( \|\Xi(\alpha_i)\|^2 + \|d\Xi(\alpha_i)\|^2 + \sum_{j \in \Ic_i} \|\Xi(\alpha_i)|_{U_{j}}\|_\Ac^2 \right) \\ 
&= \sum_{i \in \Ic} \left(\|\Xi(\alpha_i)\|_d^2  + \sum_{j \in \Ic_i} \left( \|\Xi(\alpha_i)|_{U_{j}}\|^2_d + \sum_{l \in \Ic_j}\|\Xi(\alpha_i)|_{U_{l}} \|_d^2 \right) \right).
\end{align}
\end{subequations}

Using the decomposition of $\Xi(\alpha)$, we get: 
\begin{equation}\label{eq: decomposition xialpha}
\|\Xi(\alpha)\|^2_{\Ac} = \sum_{i \in \Ic} \left(\| \phi^*_i \alpha_i \|^2_d + 2 \sum_{j \in \Ic_i} \| \phi_j^* \tr_j \alpha_i \|_d^2 + 5\|\phi_l^* \tr_l^2 \alpha_i\|_d^2 \right).
\end{equation}

Due to the overlapping nature of the \v{C}ech-de Rham complex and the choice of recursive norm, we get three norms on the non-overlapping subdomains $\Tilde{U}_i$, but a total of twelve norms for $\Tilde{U}_j$, and fifteen for the triple overlap $U_{0,1,2}$. 

The discrepancy between the number of terms in $\|\alpha\|_\Sc$ and $\|\Xi(\alpha)\|_\Ac$ can be explained by the fact that the lower-dimensional manifolds are counted several times in both cases, but they have measure zero in the mixed-dimensional setting. For example, the contributions of $\alpha_i$ on $\Omega_{j}$ is counted twice, once as the boundary of $\Omega_i$ (with measure zero) and once due to the recursive norm (with positive measure). When we look at the corresponding overlap $\Tilde{U}_j$, it appears twice with positive measure; once as a subset of $U_i$, and once due to the recursive norm.  

We introduce a norm for the simplicial de Rham complex which is weighted by the diameter $2\epsilon$:
\begin{subequations}
\begin{align}
    \|\alpha\|_{\Sc, \epsilon}^2 &= \sum_{i \in \Ic} \|\alpha\|_d^2 + \sum_{j \in \Ic_i} \| 2\epsilon \tr_j \alpha_i \|_{\Sc, \epsilon}^2 \\
&= \sum_{i \in \Ic} \left(\|\alpha_i\|_d^2  + \sum_{j \in \Ic_i} \left( \| 2\epsilon \tr_j \alpha_i\|^2_d + \sum_{l \in \Ic_j}\|4\epsilon^{2} \tr^2_l \alpha_i \|_d^2 \right) \right).
\end{align}
\end{subequations}
If the length of the sides of the larger triangle is equal to $L$, and the width of the extension is given equal to $2\epsilon$, then we have the following measures:

\begin{table}[ht!]
\centering
\begin{tabular}{||c c||} 
 \hline
 Mixed-dimensional measures & Equidimensional measures \\ [0.5ex] 
 \hline\hline
 $\mu(\Omega_i) = \frac{\sqrt{3}}{12}L^2$ &  $\mu(\Tilde{U}_i) = \frac{\sqrt{3}}{12}L^2$  \\ 
 $\mu(\Omega_{j}) = \frac{\sqrt{3}}{3}L$ & $\mu(\Tilde{U}_{j}) = \frac{2\sqrt{3}}{3}L\epsilon$  \\
 $\mu(\Omega_{0,1,2}) = 1$ & $\mu(\Tilde{U}_{0,1,2}) = \sqrt{3}\epsilon^2$  \\ [1ex] 
 \hline
\end{tabular}
\end{table}

\begin{example}    
The first and simplest example is when we let each $\alpha_i$ be a constant function on $\Omega_i$. Then $d\alpha_i = 0$, and the norm squared of each $\alpha_i$ is then 
\begin{equation}
\| \alpha_i\|^2 = \alpha_i^2 \mu(\Omega_i).
\end{equation}
 
The norm of $\alpha$ is then as follows:
\begin{equation}
\|\alpha\|_\Sc^2 =  \sum_{i=0}^2 \alpha_i^2 \left(\frac{\sqrt{3}}{12} L^2 +  \frac{2\sqrt{3}}{3}L + 2 \right).
\end{equation}
If we instead consider the weighted norm form $\alpha$, we have:
\begin{equation}\label{eq:Ex1a}
\|\alpha\|_{\Sc, \epsilon}^2 =  \sum_{i=0}^2 \alpha_i^2 \left(\frac{\sqrt{3}}{12} L^2 +  \frac{4\sqrt{3}}{3}L\epsilon + 8\epsilon^2 \right).
\end{equation}

On the other hand, the norm of $\Xi(\alpha)$ is given by:
\begin{equation}\label{eq:Ex1b}
\|\Xi(\alpha)\|_\Sc^2 =  \sum_{i=0}^2 \alpha_i^2  \left(\frac{\sqrt{3}}{12} L^2 +  \frac{8\sqrt{3}}{3}L\epsilon + 5\sqrt{3}\epsilon^2 \right).
\end{equation}

By comparing equations \eqref{eq:Ex1a} and \eqref{eq:Ex1b} we note that the leading order terms are identical, showing that the norms converge as $\epsilon \rightarrow 0$.
\end{example}

\begin{example}
Instead of limiting the problem to constant functions, we consider $\alpha_i \in H\Lambda^0(\Omega_i, \tr)$ and write $a_i = \|\alpha_i\|_d^2$. Similarly, we write $b_{j} = \| \tr_j \alpha_i\|_d^2$ and $c_{i} = \| \tr_l^2 \alpha_i\|_d^2$.

The norm for the simplicial de Rham complex is as follows:
\begin{equation}
\|\alpha\|_\Sc^2 = \sum_{i \in \Ic} \left( a_i + \sum_{j \in \Ic_i} b_{j} + 2c_i \right).
\end{equation}
Using the weighted inner products for the simplicial de Rham complex, we have the following formula for the norm:
\begin{equation}
\|\alpha\|_{\Sc, \epsilon}^2 = \sum_{i \in \Ic} \left(a_i +  2\epsilon \sum_{j \in \Ic_i} b_{j}  + 8\epsilon^2 c_i  \right)
\end{equation}

Since the image of the cochain map is constant in the normal direction(s) of the overlaps, $\| \phi_j^* \tr_j \alpha_i \|_d^2 = 2\epsilon b_{j}$. Similarly, $\|\phi_l^* \tr_l^2 \alpha_i\|_d^2 = \sqrt{3}\epsilon^2 c_i$. Together they lead to the following equation for the norm of $\Xi(\alpha)$:
\begin{equation}
\|\Xi(\alpha)\|_\Ac^2 = \sum_{i \in \Ic} \left(a_i + 4\epsilon \sum_{j \in \Ic_i} b_{j}  + 5\sqrt{3}\epsilon^2 c_i  \right)
\end{equation}

Constants $C_1 = 1$ and $C_2 = 2$ satisfies \cref{eq: bounded above and below} for this specific simplicial geometry and choice of open cover construction. 
\end{example}

\section{Summary and applications}
We consider a simplicial geometry and construct an associated open cover with a matching index set $\Ic$, where each lower-dimensional manifold $\Omega_{j}$ is given a thickness $\epsilon_j$ and corresponds to the overlap $U_j = U_{j_0} \cap ... \cap U_{j_p}$. We define the Hilbert simplicial-de Rham complex for the simplicial geometry and the Hilbert \v{C}ech-de Rham complex for the open cover. By considering the correct subcomplexes of the simplicial de Rham complex and the \v{C}ech-de Rham complex, we are able to construct a cochain map between the two complexes which is bounded from above and below. An explicit formula for the cochain map is expressed in \cref{eq: cochain map}. As a consequence, we can realize the simplicial de Rham complex as a subcomplex of the \v{C}ech-de Rham complex. 

Understanding that the simplicial de Rham complex can be realized as a subcomplex of the \v{C}ech-de Rham complex is an important tool for developing not just approximations between functions, but indeed, understanding the relationship between solutions to partial differential equations. In particular, we mention that the subcomplex property is critical for developing an \emph{a priori} theory for the solutions to partial differential equations posed on the two complexes \cite{boffi2013mixed, arnoldFEEC}.  Conversely, the subcomplex property is also a critical component in abstract \emph{a posteriori} results \cite{repin2008posteriori, pauly2020solution}. Simply said, we expect that the results of this paper can form a key ingredient in understanding the model approximation properties between large classes of mixed-dimensional and equidimensional models of compatible physical phenomena.  

\subsection*{Acknowledgments}
DFH and JMN acknowledge the support of the VISTA program, The Norwegian Academy of Science and Letters, and Equinor.

\bibliographystyle{plain}
\bibliography{References.bib}

\end{document}